\newtheorem{theorem}{Theorem}
\newtheorem{lem}{Lemma}
\newtheorem{definition}[lem]{Definition}
\newtheorem*{notation*}{Notation}
{\theoremstyle{definition}
\newtheorem{example}[lem]{Example}}
\newtheorem{pro}[lem]{Proposition}
\newtheorem{corollary}[lem]{Corollary}
\numberwithin{equation}{section}
{\theoremstyle{definition}
\newtheorem{remark}[lem]{Remark}}
\newcommand{\ssubset}{\subset\subset}
\newcommand{\wt}{\widetilde} 
\newcommand{\core}{\mathfrak{C}}
\newcommand{\sn}[1]{S_{{\mathcal{N}}_{#1}}}
\newcommand{\snt}[1]{S_{\widetilde{\mathcal{N}}_{#1}}}
\newcommand{\cnt}[1]{\widetilde{\mathcal{N}}_{#1}}
\newcommand{\cu}{\mathcal{U}}
\newcommand{\C}{{\mathbb C}}
\begin{document}

\title{\vspace{-1.2cm} \bf Modifications of the Levi core\rm}

\author{Tanuj Gupta, Emil J.~Straube, John N.~Treuer}
\date{}

\maketitle

\begin{abstract}
We construct a family of subdistributions of the Levi core $\mathfrak{C}(\mathcal{N})$ called modified Levi cores $\{\mathcal{M}\mathfrak{C}_{\mathcal{A}}\}_{\mathcal{A}}$ indexed over closed distributions $\mathcal{A}$ that contain the Levi null distribution $\mathcal{N}$ and are contained in the complex tangent bundle $T^{1, 0}b\Omega$ of a smooth bounded pseudoconvex domain $\Omega$.  We show that Catlin's Property ($P$) holds on $b\Omega$ if and only if Property ($P$) holds on the support of one, and hence all, of the modified Levi cores.  In $\mathbb{C}^2$, all of the modified Levi cores coincide.  For a smooth bounded pseudoconvex complete Hartogs domain in $\mathbb{C}^2$ that satisfies Property ($P$), we show that its modified Levi core is trivial. This contrasts with $\mathfrak{C}(\mathcal{N})$, which can be nontrivial for such domains.

\end{abstract}

\renewcommand{\thefootnote}{\fnsymbol{footnote}}
\footnotetext{\hspace*{-7mm} 
\begin{tabular}{@{}r@{}p{16.5cm}@{}}
& Keywords. $\overline{\partial}$-Neumann problem, compactness, Property ($P$), Levi core, plurisubharmonic functions\\
& Mathematics Subject Classification. Primary 32W05, 32U05
\end{tabular}

\noindent\thanks{All three authors are partially supported by NSF grant DMS-2247175. The third author is also supported by an AMS-Simons travel grant.}

\noindent August 28, 2023
}

\section{Introduction}\label{intro}

Weakly pseudoconvex points in the boundary of a smooth bounded pseudoconvex domain may be obstacles to regularity properties of the $\overline{\partial}$--Neumann operator on the domain, but need not be. For example, it has been known since \cite{Sibony87b} that if the weakly pseudoconvex boundary points lie in a submanifold $\Sigma$ of the boundary of holomorphic dimension zero, then the boundary satisfies Property ($P$), and the $\overline{\partial}$--Neumann operator is compact and so in particular is globally regular. (This fact is implicit in \cite{Catlin82}.) A submanifold $\Sigma$ of the boundary is said to have holomorphic dimension zero if its complex tangent space has trivial intersection with the Levi null distribution $\mathcal{N}$, \cite{Sibony87b}. So roughly speaking, for regularity properties of the $\overline{\partial}$-Neumann operator, it matters whether or not the set of  weakly pseudoconvex points propagates along directions in $T^{1,0}(\Sigma)$ that are also Levi null directions. Points where this does not happen should not be of concern. 

When the set of weakly pseudoconvex points is not nicely contained in a smooth submanifold of the boundary, one can consider the Zariski tangent space to this set. The essential points (the points of concern) are now those where this tangent space shares a complex direction with the Levi null space at the point. Only these latter points, and the corresponding complex directions, are retained. This gives a new subdistribution of the the Levi null distribution, and the procedure of only retaining essential points can be repeated with this new distribution replacing the Levi null distribution. This idea comes from \cite{Dall'AraMongodi21}, where the authors recently introduced this idea. Inductively, one obtains a decreasing family of distributions that, although transfinite at the outset, must stabilize at a countable ordinal. The construction is analogous to the procedure in set theory of successively taking Cantor-Bendixson derivatives of a closed set in $\mathbb{R}^n$ in order to obtain its maximal perfect subset (see \cite{Kechris95}). The authors in \cite{Dall'AraMongodi21} called this stable distribution the Levi core of the boundary and proved many interesting properties. In particular, they showed that when the support of the Levi core is empty (i.e. there are no essential weakly pseudoconvex points left), the $\overline{\partial}$--Neumann operator is exactly regular. The same authors investigated further properties of the Levi core in \cite{Dall'AraMongodi23}.

The construction described in the previous paragraph takes into account at each step the differential geometry of the set of essential points, but does not take advantage of the local potential theory of this set to discharge points and thus reduce the size of the support of the core. Indeed, a domain may satisfy Property ($P$) yet have a Levi core whose support is big, for example has positive measure on the boundary. Such an example, a complete Hartogs domain in $\mathbb{C}^{2}$, was constructed in \cite{Tr22}. We discuss this example further in Section \ref{HartogsinC2}; see also \cite{Dall'AraMongodi23} for similar constructions. In Section \ref{modified} we propose a family of modifications of the Levi core construction from \cite{Dall'AraMongodi21} that seeks to remedy this situation and produces cores with smaller supports. In particular, in the above example, the support of the modified Levi core is empty (in $\mathbb{C}^{2}$, our family reduces to a single core). In general, we prove that if the support of one of our modified Levi cores has Property ($P$), then the boundary of the domain also has Property ($P$) (Theorem \ref{the main result.  Property (P) holds on the support of the new core if and only if it holds on the boundary of the domain}); this generalizes the analogous statement, shown in \cite{Tr22}, for the Levi core $\mathfrak{C}(\mathcal{N})$ from \cite{Dall'AraMongodi21}. Note that if the boundary has Property ($P$), then trivially the supports of the cores have it, since they are compact subsets of the boundary; in particular, the supports in our family either all have Property ($P$), or none do. Theorem \ref{the main result.  Property (P) holds on the support of the new core if and only if it holds on the boundary of the domain} is proved in Section \ref{main theorem}; Section \ref{prep} contains auxiliary lemmas needed in the proof of Theorem \ref{the main result.  Property (P) holds on the support of the new core if and only if it holds on the boundary of the domain}.

Our construction involves deciding whether or not certain local projections of the set of weakly pseudoconvex points onto subspaces of the complex tangent space (at a point) have Property ($P$) as subsets of that subspace. Because Property ($P$) is satisfactorily understood only in dimension one, in Section \ref{An alternative family of modified cores}, we also describe an alternative construction that only considers projections onto complex lines, so that one then only deals with planar sets (see Corollary \ref{doublymodified}). This alternative produces a less refined tool (the support of the resulting cores is bigger), but one that is easier to work with.

This alternative construction notwithstanding, what we do here is most useful in $\mathbb{C}^{2}$. In this case, there is only one modified Levi core for each domain. In Section \ref{HartogsinC2}, we show that for complete Hartogs domains in $\mathbb{C}^{2}$, the construction is optimal: the boundary satisfies Property ($P$) if and only if the support of the modified Levi core is empty (Proposition \ref{hartogs}). In addition, we revisit a class of examples from \cite{Tr22} whose Levi core was shown there to have positive measure in the boundary, and so in particular to have the maximal possible Hausdorff dimension three. We show that for these domains, the modified Levi core is trivial.

\section{A family of modifications of the Levi core}\label{modified}

Let $\Omega\subseteq\C^n$ be a bounded pseudoconvex domain. Given a distribution $\mathcal{D} = \cup_{p \in b\Omega}\mathcal{D}_p$ of $b\Omega$, with $\mathcal{D}_{p}$ a subspace of $T_{p}b\Omega$, let $S_{\mathcal{D}}$ denote the support of $\mathcal{D}$; that is,
$$
S_{\mathcal{D}} = \left\{p \in b\Omega: \mathcal{D}_p \neq \{0\}\right\}.
$$
$\mathcal{D}$ is said to be closed if it is closed in $Tb\Omega$. We denote the Levi null distribution by $\mathcal{N}$, so $\mathcal{N}_{p}$ is the null space of the Levi form at $p$.

A compact set $K$ in $\mathbb{C}^{n}$ is said to satisfy Property ($P$) if the following holds: for every $M>0$ there exists a $C^2$ plurisubharmonic function $\lambda_{M}$ in a neighborhood $U_{M}$ of $K$ with $0\leq \lambda_{M}\leq 1$ and with $\sum_{j,k=1}^{n}\frac{\partial^{2}\lambda_{M}}{\partial z_{j}\partial\overline{z_{k}}}(z)u_{j}\overline{u_{k}} \geq M|u|^{2}$, for all $u\in\mathbb{C}^{n}$ and $z\in U_{M}$. This notion goes back to \cite{Catlin82} when $K$ is the boundary of a domain and in general to \cite{Sibony87b} (under the name $B$--regularity). Its relevance stems from the fact that Property ($P$) of the boundary of a domain is the most widely applicable sufficient condition for compactness of the $\overline{\partial}$--Neumann operator on the domain; see \cite{Straube10}, chapter 4, for background and details on these topics. 

The construction of the Levi core (modified or not) requires the notion of the tangent distribution to an arbitrary subset $A\subseteq b\Omega$. We follow \cite{Dall'AraMongodi21}, Definition 2.5. At a point $p\in b\Omega$, a real vector $v_{p}\in T_{p}(b\Omega)$ belongs to the fiber $T_{p}A$ if and only if $v_{p}f=0$ for all $f\in C^{\infty}(b\Omega)$ with $f|_{A}\equiv 0$. We set $T^{1,0}_{p}A = T_{p}A\cap JT_{p}A$, where $J$ is the complex structure map (i.e. multiplication by $i$) in $\mathbb{C}^{n}$.

We now describe the algorithm to construct the modified Levi cores. The idea is to take into account the pluripotential theory along complex tangent directions of the support of a distribution (starting with $\mathcal{N}$) that are also Levi null directions. That is, we will declare a point negligible (see Definition \ref{neg1}) if locally there is a projection onto a specified subspace between the Levi null space and the complex tangent space whose image satisfies Property ($P$). The examples in \cite{Tr22} mentioned in the introduction are in $\mathbb{C}^{2}$. In this case, the relevant projections at a weakly pseudoconvex point $p\in b\Omega$ are onto $T_p^{1,0}(b\Omega)=\mathcal{N}_{p}$. In the case of $\mathbb{C}^{n}$, $n \geq 2$, $\mathcal{N}_{p}$ may be a nontrivial proper subspace of $ T^{1,0}_{p}(b\Omega)$, and one has to choose the subspace between $\mathcal{N}_p$ and $T_p^{1,0}b\Omega$ to project on. We do this by specifying a closed distribution $\mathcal{A}$ with $\mathcal{N}\subseteq\mathcal{A}\subseteq T^{1,0}(b\Omega)$, and projecting onto $\mathcal{A}_{p}$.

The construction is analogous to \cite{Dall'AraMongodi21}, as described in the introduction. Various elementary properties of ordinal numbers that are used in the sequel may be found, for example, in \cite{Halmos74}. We begin with a definition.  
\begin{definition}\label{neg1}
     Given closed distributions $\mathcal{A}$ and $\mathcal{D}$ on $b\Omega$ such that $\mathcal{D} \subseteq \mathcal{N} \subseteq \mathcal{A} \subseteq T^{1,0}b\Omega$, a point $p\in S_{\mathcal{D}}$ is said to be negligible with respect to $\mathcal{A}$ and $\mathcal{D}$ if there are a neighborhood $U\subseteq\C^n$ of $p$ and a linear projection $\pi_{p}\,:\,\mathbb{C}^{n}\rightarrow \mathcal{A}_{p}$
    such that for any neighborhood $V\ssubset U$ of $p$, $\pi_p(\overline{V}\cap S_{\mathcal{D}})$ has Property ($P$) in $\mathcal{A}_{p}$.
   \end{definition}

\smallskip

Set $\wt{\mathcal{N}}_{0, \mathcal{A}} = \wt{\mathcal{N}}_0 = \mathcal{N}$. Denote by $S_{0, \mathcal{A}}$ the complement of the negligible points with respect to $\widetilde{\mathcal{N}}_{0}$ and $\mathcal{A}$, i.e.,
\begin{equation}\label{def1} 
S_{0, \mathcal{A}} = \snt{0} \setminus \{ p \in \snt{0}\ :\ \hbox{$p$ is negligible with respect to $\widetilde{\mathcal{N}}_{0}$ and $\mathcal{A}$}  \}\;.
\end{equation}
Define 
\begin{equation}\label{def2}
(\mathcal{N}_{0, \mathcal{A}})_p := \begin{cases}
    \{0\} & p \not\in \overline{S_{0, \mathcal{A}}} \\
    (\wt{\mathcal{N}}_{0, \mathcal{A}})_p & p \in \overline{S_{0, \mathcal{A}}}.
\end{cases}
\end{equation}
Inductively, we define the higher order distributions. For a successor ordinal $\alpha$, define
\begin{equation}\label{def3}
\cnt{\alpha, \mathcal{A}} = \mathcal{N}_{\alpha -1, \mathcal{A}} \cap T^{1, 0}S_{\mathcal{N}_{\alpha - 1}, \mathcal{A}}.
\end{equation}
For a limit ordinal $\alpha$, define
\begin{equation}\label{def4}
\wt{\mathcal{N}}_{\alpha, \mathcal{A}} = \bigcap\limits_{\beta < \alpha} \wt{\mathcal{N}}_{\beta, \mathcal{A}}.
\end{equation}
Denote by $S_{\alpha, \mathcal{A}}$ the complement of the negligible points with respect to $\mathcal{A}$ and $\widetilde{\mathcal{N}}_{\alpha, \mathcal{A}}$, i.e.,
\begin{equation}\label{Salpha}
S_{\alpha, \mathcal{A}} = \snt{\alpha, \mathcal{A}} \setminus \{ p \in \snt{\alpha, \mathcal{A}}\ :\ \hbox{$p$ is negligible with respect to $\widetilde{\mathcal{N}}_{\alpha, \mathcal{A}}$ and $\mathcal{A}$} \}\;.
\end{equation}
Define 
\begin{equation}\label{defineN}
(\mathcal{N}_{\alpha, \mathcal{A}})_p := \begin{cases}
    \{0\} & p \not\in \overline{S_{\alpha, \mathcal{A}}} \\
    (\wt{\mathcal{N}}_{\alpha, \mathcal{A}})_p & p \in \overline{S_{\alpha, \mathcal{A}}}.
\end{cases}
\end{equation}
From \eqref{def1}-\eqref{defineN}, for any ordinal $\alpha$,
\begin{equation}\label{def intertwined Ns}
\widetilde{\mathcal{N}}_{\alpha, \mathcal{A}} \supseteq \mathcal{N}_{\alpha, \mathcal{A}} \supseteq \widetilde{\mathcal{N}}_{\alpha + 1, \mathcal{A}} \supseteq \mathcal{N}_{\alpha + 1, \mathcal{A}}.
\end{equation}
When the closed distribution $\mathcal{A}$ is clear from context, we will suppress the subscript $\mathcal{A}$ for these distributions and sets and write instead $\wt{\mathcal{N}}_{\alpha}, \mathcal{N}_{\alpha}$ or $S_{\alpha}$. In Lemmas \ref{N_alpha is a closed distribution}, \ref{stabilization}, \ref{rewriting the definition of derived distributions}-\ref{lemma about main decomposition of boundary}, the particular choice of closed distribution $\mathcal{A}$ with $\mathcal{N} \subseteq \mathcal{A} \subseteq T^{1, 0}b\Omega$ is not used in their proofs.  For clarity, the lemmas and their proofs are stated without reference to $\mathcal{A}$ and the abbreviated notation $\wt{\mathcal{N}}_{\alpha}, \mathcal{N}_{\alpha}$ or $S_{\alpha}$ is used.  In Section \ref{HartogsinC2}, all domains will be in $\mathbb{C}^2$, so for all weakly pseudoconvex boundary points $\mathcal{N}_p = \mathcal{A}_p = T^{1, 0}_pb\Omega$.  Hence in that section it will not be necessary to specify the distribution $\mathcal{A}$.
\begin{lem}\label{N_alpha is a closed distribution}
    For all ordinals $\alpha$, $\wt{\mathcal{N}_{\alpha}}$ and $\mathcal{N}_\alpha$ are closed distributions and $\sn{\alpha} = \overline{S_\alpha}$. Moreover, $\{\widetilde{\mathcal{N}_{\alpha}}\}_{\alpha}$, $\{S_{\alpha}\}_{\alpha}$, and $\{\mathcal{N}_{\alpha}\}_{\alpha}$ are monotone transfinite sequences.
\end{lem}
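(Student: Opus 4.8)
The plan is a transfinite induction on $\alpha$ in which one carries along, simultaneously, all of the assertions: that $\cnt{\alpha}$ is a closed distribution; that $\cn{\alpha}$ is a closed distribution with $\sn{\alpha} = \overline{S_\alpha}$; and that $\cnt{\beta} \supseteq \cnt{\alpha}$, $\cn{\beta}\supseteq\cn{\alpha}$ and $S_\beta \supseteq S_\alpha$ whenever $\beta \le \alpha$. Before starting I would isolate a few elementary facts. (a) \emph{The support of a closed distribution $\mathcal{D}$ is closed in $b\Omega$}: if $p_k \to p$ with $\mathcal{D}_{p_k}\neq\{0\}$, pick unit vectors $v_k \in \mathcal{D}_{p_k}$; since $b\Omega$ is compact these lie in a compact set, so a subsequence converges to some $v$ with $|v|=1$, and $v\in\mathcal{D}_p$ because $\mathcal{D}$ is closed, whence $p\in S_{\mathcal{D}}$. (b) \emph{For every $A\subseteq b\Omega$ the distribution $TA$, and hence $T^{1,0}A = TA\cap J(TA)$, is closed}: $TA$ is the intersection, over all $f\in C^\infty(b\Omega)$ with $f|_A\equiv 0$, of the sets $\{v_q : v_q f = 0\}$, each of which is closed because $(q,v_q)\mapsto v_q f = df_q(v_q)$ is continuous. (c) \emph{$\mathcal{N} = \cnt 0$ is closed}: by pseudoconvexity the Levi form is positive semidefinite and depends continuously on the point, so a limit of Levi-null vectors is again Levi null; together with (a), $S_{\mathcal{N}}$ is also closed. (d) \emph{Property $(P)$ passes to compact subsets}, since the defining plurisubharmonic functions on a neighborhood of the larger set restrict to a neighborhood of the smaller one.

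For the base case, $\cnt 0 = \mathcal{N}$ is closed by (c). Closedness of $\cn 0$ follows from \eqref{def2}: a limit $v_p\neq 0$ of vectors $v_{p_k}\in(\cn 0)_{p_k}$ has $p_k\in\overline{S_0}$ and $v_{p_k}\in\mathcal{N}_{p_k}$ for $k$ large, hence $p\in\overline{S_0}$ and $v_p\in\mathcal{N}_p$, i.e. $v_p\in(\cn 0)_p$. Since $S_0\subseteq S_{\mathcal{N}}$ with $S_{\mathcal{N}}$ closed, $\overline{S_0}\subseteq S_{\mathcal{N}}$, so $(\cn 0)_p=\mathcal{N}_p\neq\{0\}$ on $\overline{S_0}$ and $=\{0\}$ off it; thus $\sn 0 = \overline{S_0}$. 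For a successor $\alpha=\gamma+1$: by \eqref{def3}, $\cnt{\gamma+1} = \cn\gamma\cap T^{1,0}S_{\mathcal{N}_\gamma}$ is closed by the inductive hypothesis on $\cn\gamma$ together with (b). The passage from $\cnt{\gamma+1}$ to $\cn{\gamma+1}$ via \eqref{defineN} is then handled exactly as in the base case, using that $S_{\gamma+1}\subseteq\snt{\gamma+1}$ (by \eqref{Salpha}) and that $\snt{\gamma+1}$ is closed by (a); this yields both closedness of $\cn{\gamma+1}$ and $\sn{\gamma+1}=\overline{S_{\gamma+1}}$. For a limit ordinal $\alpha=\lambda$: $\cnt\lambda = \bigcap_{\beta<\lambda}\cnt\beta$ is a closed distribution as an intersection of such, and the treatment of $\cn\lambda$ and of $\sn\lambda = \overline{S_\lambda}$ is identical to the successor step.

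For monotonicity of $\{\cnt\alpha\}$ and $\{\cn\alpha\}$: the one-step inclusions $\cnt\alpha\supseteq\cn\alpha\supseteq\cnt{\alpha+1}\supseteq\cn{\alpha+1}$ are precisely \eqref{def intertwined Ns}, and at a limit ordinal $\lambda$, for $\beta<\lambda$ one has $\beta+1<\lambda$ and hence $\cnt\lambda\subseteq\cnt{\beta+1}\subseteq\cn\beta\subseteq\cnt\beta$ by \eqref{def4}; a transfinite induction then gives $\cnt\alpha\subseteq\cnt\beta$ and $\cn\alpha\subseteq\cn\beta$ for all $\beta\le\alpha$. For $\{S_\alpha\}$, fix $\beta<\alpha$ and $p\in S_\alpha$; then $p\in\snt\alpha\subseteq\snt\beta$. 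Suppose $p$ were negligible with respect to $\mathcal{A}$ and $\cnt\beta$ in the sense of Definition \ref{neg1}, with witnesses $U$ and $\pi_p\colon\C^n\to\mathcal{A}_p$. For any $V\ssubset U$ the set $\pi_p(\overline V\cap\snt\alpha)$ is compact (as $\overline V$ is compact and $\snt\alpha$ is closed by (a)) and is contained in $\pi_p(\overline V\cap\snt\beta)$, which has Property $(P)$ in $\mathcal{A}_p$; by (d), so does $\pi_p(\overline V\cap\snt\alpha)$. Hence $p$ would be negligible with respect to $\mathcal{A}$ and $\cnt\alpha$, contradicting $p\in S_\alpha$. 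Therefore $p\in S_\beta$, proving $S_\alpha\subseteq S_\beta$.

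I expect the only genuine difficulty to be organizational: one must set up the induction so that closedness of $\cnt\alpha$, closedness of $\cn\alpha$, the identity $\sn\alpha=\overline{S_\alpha}$, and the three monotonicity statements all feed each other correctly across the successor and limit stages (in particular, closedness of $\cn\alpha$ is what keeps $\cnt{\alpha+1}$, and hence the next stage, under control). The underlying inputs (a)--(d) are each short, and no step requires an estimate beyond the elementary observation in (d).
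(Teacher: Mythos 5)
Your proposal is correct and follows essentially the same line as the paper's proof: a transfinite induction on $\alpha$ carrying closedness of $\wt{\mathcal{N}}_{\alpha}$, closedness of $\mathcal{N}_{\alpha}$, and the identity $\sn{\alpha}=\overline{S_{\alpha}}$ together, with the successor step using that $\mathcal{N}_{\alpha}\cap T^{1,0}\sn{\alpha}$ is an intersection of closed distributions and the limit step using that an intersection of closed distributions is closed; the monotonicity of $\{S_\alpha\}$ then reduces, exactly as in the paper, to the observation that negligibility with respect to a larger distribution implies negligibility with respect to a smaller one because Property ($P$) passes to compact subsets. The only cosmetic difference is that you isolate the auxiliary facts (support of a closed distribution is closed, $T^{1,0}A$ is closed, $\mathcal{N}$ is closed, Property ($P$) is inherited by compact subsets) explicitly, whereas the paper uses them implicitly; the substance of the argument is the same.
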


\begin{proof}
    We proceed by transfinite induction.  $\wt{\mathcal{N}_0}$ is a closed distribution because it is equal to the Levi null distribution.  It is clear from the definition that $\sn{0} = \overline{S_0}$. We now show that $\mathcal{N}_0$ is a closed distribution. Suppose $\{(p_n, v_n)\} \subset \mathcal{N}_0$ converges to $(p_{\infty}, v_{\infty})$. If infinitely many $v_n$'s are zero, then $v_{\infty} = 0$.  Thus, $(p_{\infty}, v_{\infty}) \in \mathcal{N}_0$.  On the other hand, if only finitely many $v_n's$ are zero, then passing to a subsequence, we can assume that $v_n \neq 0$ for all $n$.  Then $p_n \in \sn{0} = \overline{S_0}$, which implies that $p_{\infty} \in \overline{S_0}$. Moreover, since $p_n \in \sn{0}$, we have
    $$
    v_n \in (\cnt{0})_{p_n} = (\mathcal{N}_0)_{p_n}.
    $$
Since $(p_n, v_n) \in \cnt{0}$ and $\cnt{0}$ is a closed distribution, we get $(p_{\infty}, v_{\infty}) \in \cnt{0}$.   Thus,
    $$
    v_{\infty} \in (\cnt{0})_{p_{\infty}} = (\mathcal{N}_0)_{p_{\infty}},
    $$
    where the last equality follows because $p_{\infty} \in \sn{0} = \overline{S_0}$.  Thus, $(p_{\infty}, v_{\infty}) \in \mathcal{N}_0$. 
Suppose now that $\mathcal{N}_{\alpha}$ and $\wt{\mathcal{N}}_{\alpha}$ are known to be closed distributions.  Then $\wt{\mathcal{N}}_{\alpha + 1} = \mathcal{N}_{\alpha} \cap T^{1, 0}\sn{\alpha}$ is also closed because it is the intersection of two closed distributions.  The proof that $\sn{\alpha + 1} = \overline{S_{\alpha + 1}}$ and $\mathcal{N}_{\alpha + 1}$ is a closed distribution is the same is the same as the proof for the base case except that the subscript 0 is replaced by $\alpha + 1$.  
Next assume that for a limit ordinal $\alpha$, for all $\beta < \alpha$, it is known that $\mathcal{N}_{\beta}$ and $\wt{\mathcal{N}}_{\beta}$ are closed distributions.  Then $\wt{\mathcal{N}}_{\alpha}$ is closed because it is the intersection of closed distributions. The proof that $\sn{\alpha} = \overline{S_{\alpha}}$ and $\mathcal{N}_{\alpha}$ is a closed distribution is again the same as the proof for the base case except that the subscript 0 is now replaced by $\alpha$.
    
As for the monotonicity claims, the one for $\widetilde{\mathcal{N}}_{\alpha}$ again follows from a straightforward transfinite induction argument. Then $S_{\widetilde{\mathcal{N}}_{\alpha}}$ is likewise monotone, and the assertion for $S_{\alpha}$ follows from definition \eqref{Salpha} together with the fact that if a point $p\in S_{\widetilde{\mathcal{N}}_{\beta}}$ is negligible with respect to $\widetilde{\mathcal{N}_{\alpha}}$, then it is also negligible with respect to $\widetilde{\mathcal{N}_{\beta}}$ for $\beta > \alpha$. Finally, monotonicity of $\mathcal{N}_{\alpha}$ follows from that of $\widetilde{\mathcal{N}_{\alpha}}$ and $S_{\alpha}$ together with definition \eqref{defineN}.
\end{proof}

Lemma \ref{N_alpha is a closed distribution} shows that
$$
\widetilde{\mathcal{N}_0} \supseteq \widetilde{\mathcal{N}_1} \supseteq \widetilde{\mathcal{N}_2} \supseteq \cdots
$$
and
$$
\mathcal{N}_0 \supseteq \mathcal{N}_1 \supseteq \mathcal{N}_2 \supseteq \cdots
$$
are nested (transfinite) sequences of closed distributions. By \cite{Kechris95}, Theorem 6.9 (see also \cite{Dall'AraMongodi21}, Theorem 2.9), there is a countable ordinal $\gamma$ such that $\widetilde{\mathcal{N}_{\gamma}}=\widetilde{\mathcal{N}}_{\gamma+1}$.

\begin{lem}\label{stabilization}
For all ordinals $\alpha \geq \gamma$, we have
\begin{equation}\label{stable}
 \widetilde{\mathcal{N}_{\alpha}}=\widetilde{\mathcal{N}_{\gamma}}=\mathcal{N}_{\gamma}=\mathcal{N}_{\alpha}\;.
\end{equation}
\end{lem}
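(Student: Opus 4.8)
The plan is to run a transfinite induction on $\alpha\geq\gamma$, using the observation that each distribution in the construction is determined by its immediate predecessor. Precisely, I would first record two elementary ``functoriality'' facts. By \eqref{Salpha} and \eqref{defineN}, and because negligibility in the sense of Definition \ref{neg1} depends only on the support of the distribution in question (and on the fixed ambient distribution $\mathcal{A}$), the distribution $\mathcal{N}_{\alpha}$ is completely determined by $\widetilde{\mathcal{N}}_{\alpha}$; and by \eqref{def3}, $\widetilde{\mathcal{N}}_{\alpha+1}$ is completely determined by $\mathcal{N}_{\alpha}$. Equivalently: $\widetilde{\mathcal{N}}_{\alpha}=\widetilde{\mathcal{N}}_{\beta}$ implies $\mathcal{N}_{\alpha}=\mathcal{N}_{\beta}$, and $\mathcal{N}_{\alpha}=\mathcal{N}_{\beta}$ implies $\widetilde{\mathcal{N}}_{\alpha+1}=\widetilde{\mathcal{N}}_{\beta+1}$.

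With this in hand, I would first settle the level $\gamma$: since $\widetilde{\mathcal{N}}_{\gamma+1}=\widetilde{\mathcal{N}}_{\gamma}$ by the choice of $\gamma$, the chain \eqref{def intertwined Ns} at $\alpha=\gamma$ becomes $\widetilde{\mathcal{N}}_{\gamma}\supseteq\mathcal{N}_{\gamma}\supseteq\widetilde{\mathcal{N}}_{\gamma+1}=\widetilde{\mathcal{N}}_{\gamma}$, so $\widetilde{\mathcal{N}}_{\gamma}=\mathcal{N}_{\gamma}$, which is the middle equality in \eqref{stable}. It then suffices to prove that $\widetilde{\mathcal{N}}_{\alpha}=\widetilde{\mathcal{N}}_{\gamma}$ and $\mathcal{N}_{\alpha}=\mathcal{N}_{\gamma}$ for all $\alpha\geq\gamma$. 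The case $\alpha=\gamma$ is trivial. For a successor $\alpha=\beta+1$ with $\beta\geq\gamma$, the inductive hypothesis $\mathcal{N}_{\beta}=\mathcal{N}_{\gamma}$ gives, by the second functoriality fact, $\widetilde{\mathcal{N}}_{\beta+1}=\widetilde{\mathcal{N}}_{\gamma+1}=\widetilde{\mathcal{N}}_{\gamma}$, and then the first functoriality fact gives $\mathcal{N}_{\beta+1}=\mathcal{N}_{\gamma}$. For a limit ordinal $\alpha=\lambda>\gamma$, \eqref{def4} gives $\widetilde{\mathcal{N}}_{\lambda}=\bigcap_{\beta<\lambda}\widetilde{\mathcal{N}}_{\beta}$; for $\beta<\gamma$ one has $\widetilde{\mathcal{N}}_{\beta}\supseteq\widetilde{\mathcal{N}}_{\gamma}$ by the monotonicity in Lemma \ref{N_alpha is a closed distribution}, while for $\gamma\leq\beta<\lambda$ the inductive hypothesis gives $\widetilde{\mathcal{N}}_{\beta}=\widetilde{\mathcal{N}}_{\gamma}$; since $\beta=\gamma$ occurs among these indices, the intersection is exactly $\widetilde{\mathcal{N}}_{\gamma}$, and the first functoriality fact then yields $\mathcal{N}_{\lambda}=\mathcal{N}_{\gamma}$. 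Combining these equalities with $\widetilde{\mathcal{N}}_{\gamma}=\mathcal{N}_{\gamma}$ gives \eqref{stable} for all $\alpha\geq\gamma$.

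The argument is short, and the only step that needs genuine care is the functoriality observation: one must check that passing from $\widetilde{\mathcal{N}}_{\alpha}$ to $\mathcal{N}_{\alpha}$, and from $\mathcal{N}_{\alpha}$ to $\widetilde{\mathcal{N}}_{\alpha+1}$, uses no information beyond the indicated distribution and the fixed $\mathcal{A}$ — in particular that the collection of negligible points of a distribution is a function of its support alone. This is what turns the single-step coincidence $\widetilde{\mathcal{N}}_{\gamma}=\widetilde{\mathcal{N}}_{\gamma+1}$ into stabilization for all ordinals $\alpha\geq\gamma$; the limit case is then pure bookkeeping on top of the monotonicity already established in Lemma \ref{N_alpha is a closed distribution}.
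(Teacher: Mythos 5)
Your proof is correct and follows essentially the same route as the paper's: establish $\widetilde{\mathcal{N}}_{\gamma}=\mathcal{N}_{\gamma}$ using the chain \eqref{def intertwined Ns} together with $\widetilde{\mathcal{N}}_{\gamma}=\widetilde{\mathcal{N}}_{\gamma+1}$, then run a transfinite induction on $\alpha\geq\gamma$ with the successor and limit cases handled by the construction rules \eqref{def3}--\eqref{defineN} and the monotonicity from Lemma~\ref{N_alpha is a closed distribution}. The one thing you make more explicit than the paper is the ``functoriality'' observation that $\widetilde{\mathcal{N}}_{\alpha}$ determines $\mathcal{N}_{\alpha}$ (via \eqref{Salpha}--\eqref{defineN}) and $\mathcal{N}_{\alpha}$ determines $\widetilde{\mathcal{N}}_{\alpha+1}$ (via \eqref{def3}); the paper compresses this to ``the step $\alpha\to\alpha+1$ is straightforward from the definitions,'' so your write-up spells out cleanly what that phrase is resting on.
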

\noindent The last equality in \eqref{stable} is the one of main interest.
\begin{proof}
We have $\widetilde{\mathcal{N}}_{\gamma}=\widetilde{\mathcal{N}}_{\gamma+1}
= \mathcal{N}_{\gamma}\cap T^{1,0}S_{\mathcal{N}_{\gamma}}$ (by \ref{def3}) and $\mathcal{N}_{\gamma}\subseteq\widetilde{\mathcal{N}_{\gamma}}$ (by \ref{def intertwined Ns}); this implies $\mathcal{N}_{\gamma}=\widetilde{\mathcal{N}_{\gamma}}$. Definition \ref{Salpha} gives that $S_{\gamma+1}=S_{\gamma}$, whence $\mathcal{N}_{\gamma+1}=\mathcal{N}_{\gamma}$, from \eqref{defineN}. Now \eqref{stable} follows by transfinite induction. The step $\alpha \rightarrow (\alpha+1)$ is straightforward from the definitions. When $\alpha$ is a limit ordinal, the induction assumption gives $\widetilde{\mathcal{N}_{\alpha}}=\cap_{\beta<\alpha}\widetilde{\mathcal{N}_{\beta}} = \cap_{\gamma<\beta<\alpha}\widetilde{\mathcal{N}_{\beta}}=\widetilde{\mathcal{N}_{\gamma}}$. But then also $S_{\alpha}=S_{\gamma}$ (from \eqref{Salpha}), and, in turn, $\mathcal{N}_{\alpha}=\mathcal{N}_{\gamma}$.
\end{proof}
For a closed distribution $\mathcal{A}$ with $\mathcal{N} \subseteq \mathcal{A} \subseteq T^{1, 0}b\Omega$, we define the modified Levi core with respect to $\mathcal{A}$ by
\begin{equation}\label{the definition of the core}
\mathcal{M}\mathfrak{C}_{\mathcal{A}} := \mathcal{N}_{\gamma}\;,
\end{equation}
where $\gamma$ is the smallest countable ordinal that satisfies \eqref{stable}.  We shall suppress the subscript $\mathcal{A}$ when the particular choice of closed distribution $\mathcal{A}$ is clear from context or does not affect the proof of a lemma or theorem. The main result of the paper is that Property ($P$) on the boundary of the domain $\Omega$ is determined by the support of $\mathcal{M}\mathfrak{C}_{\mathcal{A}}$:
\begin{theorem}\label{the main result.  Property (P) holds on the support of the new core if and only if it holds on the boundary of the domain}
Let $\Omega$ be a smooth bounded pseudoconvex domain and $\mathcal{A}$ be a closed distribution such that $\mathcal{N} \subseteq \mathcal{A} \subseteq T^{1, 0}b\Omega$. Then $b\Omega$ satisfies Property ($P$) if and only if the support of $\mathcal{M}\mathfrak{C}_{\mathcal{A}}$ satisfies Property ($P$).
\end{theorem}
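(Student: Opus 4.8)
\noindent The forward implication is immediate: the support $\snt{\gamma}=\sn{\gamma}$ of $\mathcal{M}\mathfrak{C}_{\mathcal{A}}$ is a compact subset of $b\Omega$, and Property ($P$) is inherited by compact subsets. For the converse, assume $\snt{\gamma}$ satisfies Property ($P$). Since Property ($P$) of $b\Omega$ is determined by the set $\snt{0}=S_{\mathcal{N}}$ of weakly pseudoconvex points — off $S_{\mathcal{N}}$ the domain is strongly pseudoconvex, and one glues, via the localization principle for Property ($P$) (\cite{Sibony87b}; see also \cite{Straube10}), bounded plurisubharmonic functions with large complex Hessian on $S_{\mathcal{N}}$ to ones from the strongly pseudoconvex part — it suffices to show that $\snt{0}$ satisfies Property ($P$). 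I would prove this by transfinite induction along the nested closed sets $\snt{0}\supseteq\snt{1}\supseteq\cdots\supseteq\snt{\gamma}$, propagating Property ($P$) ``from the core outward'': for each $\alpha\le\gamma$, establish the statement $(\Psi_\alpha)$: \emph{for every compact $K\subseteq\snt{0}$, if $K\cap\snt{\alpha}$ has Property ($P$) then $K$ has Property ($P$)}. Then $(\Psi_0)$ is trivial, and $(\Psi_\gamma)$ applied to $K=\snt{0}$ (so $K\cap\snt{\gamma}=\snt{\gamma}$), together with the hypothesis, gives that $\snt{0}$ has Property ($P$). Two elementary facts about Property ($P$) would be used repeatedly, both reducible to the localization principle: a finite union of compacta with Property ($P$) has Property ($P$); and if a compact set $K=A\cup B$ has $A$ compact with Property ($P$) and every point of $B$ has a neighborhood $U$ with $\overline U\cap K$ having Property ($P$), then $K$ has Property ($P$). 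These are collected in Section \ref{prep}.

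\noindent The successor step $(\Psi_\alpha)\Rightarrow(\Psi_{\alpha+1})$ is where points are discharged. Given compact $K\subseteq\snt{0}$ with $K\cap\snt{\alpha+1}$ having Property ($P$), by $(\Psi_\alpha)$ it suffices to show $K\cap\snt{\alpha}$ has Property ($P$); I would do this along the chain $\snt{\alpha}\supseteq\sn{\alpha}=\overline{S_{\alpha}}\supseteq\snt{\alpha+1}$, in two applications of the second fact above. Going from $K\cap\snt{\alpha+1}$ to $K\cap\sn{\alpha}$: every $q\in\sn{\alpha}\setminus\snt{\alpha+1}$ satisfies $(\cn{\alpha})_q\cap T^{1,0}_q\sn{\alpha}=\{0\}$, i.e.\ $\cn{\alpha}$ is transverse at $q$ to the tangent distribution of $\sn{\alpha}$; near such a point $\sn{\alpha}$ behaves like a set of holomorphic dimension zero and has a neighborhood meeting it in a compactum with Property ($P$) — this is exactly the mechanism of the unmodified Levi core treated in \cite{Dall'AraMongodi21} and \cite{Tr22}. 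Going from $K\cap\sn{\alpha}$ to $K\cap\snt{\alpha}$ — the genuinely new step: every $q\in\snt{\alpha}\setminus\sn{\alpha}$ is negligible with respect to $\cnt{\alpha}$ and $\mathcal{A}$, so by Definition \ref{neg1} there are a neighborhood $U\ni q$ and a linear projection $\pi_q:\C^n\to\mathcal{A}_q$ with $\pi_q(\overline V\cap\snt{\alpha})$ having Property ($P$) in $\mathcal{A}_q$ for all $V\ssubset U$. I would upgrade this to Property ($P$) of $\overline V\cap\snt{\alpha}$ in $\C^n$: for each $M$, the pull-back $\mu_M\circ\pi_q$ of a function $\mu_M$ witnessing Property ($P$) at level $M$ for $\pi_q(\overline V\cap\snt{\alpha})$ is plurisubharmonic, lies in $[0,1]$, and has complex Hessian $\ge M|\pi_q u|^2$ — controlling all directions in $\mathcal{A}_q$ (hence in $\mathcal{N}_q\subseteq\mathcal{A}_q$) but degenerate along $\ker\pi_q$. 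Since $\pi_q$ is a projection onto $\mathcal{A}_q$ and $\mathcal{N}_q\subseteq\mathcal{A}_q$, the kernel $\ker\pi_q$ meets $\mathcal{N}_q$ only in $0$; hence the Levi form of $b\Omega$ at $q$, positive semidefinite with null space $\mathcal{N}_q$, is positive definite on $(T^{1,0}_q b\Omega)\cap\ker\pi_q$. This makes it possible to add, near $q$, a fixed plurisubharmonic function (built from a defining function of $\Omega$, as in the construction for submanifolds of holomorphic dimension zero) that is strictly plurisubharmonic transverse to $\mathcal{A}_q$; after rescaling and clamping the sum into $[0,1]$, one gets the required functions for $\overline V\cap\snt{\alpha}$ near $q$.

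\noindent At a limit ordinal $\lambda\le\gamma$ no discharging occurs, and the inductive step rests on the structural fact (an elementary consequence of finite-dimensionality, and the reason points leave the filtration only at successor stages) that $\snt{\lambda}=\bigcap_{\beta<\lambda}\snt{\beta}$: along the decreasing chain of subspaces $(\cnt{\beta})_q\subseteq\C^n$ the dimension stabilizes below $\lambda$, so $(\cnt{\lambda})_q=\bigcap_{\beta<\lambda}(\cnt{\beta})_q\ne\{0\}$ exactly when every $(\cnt{\beta})_q\ne\{0\}$. Assuming $(\Psi_\beta)$ for all $\beta<\lambda$, let $K\subseteq\snt{0}$ be compact with $K\cap\snt{\lambda}$ having Property ($P$). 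Every $q\in K\setminus\snt{\lambda}$ lies outside the closed set $\snt{\beta(q)}$ for some $\beta(q)<\lambda$, so has a bounded neighborhood $U_q$ with $\overline{U_q}\cap\snt{\beta(q)}=\emptyset$; then $\overline{U_q}\cap K$ is a compact subset of $\snt{0}$ whose intersection with $\snt{\beta(q)}$ is empty and hence trivially has Property ($P$), so $(\Psi_{\beta(q)})$ gives that $\overline{U_q}\cap K$ has Property ($P$). Applying the second fact above with $A=K\cap\snt{\lambda}$ and $B=K\setminus\snt{\lambda}$ then yields $(\Psi_\lambda)$, completing the induction.

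\noindent The step I expect to be the main obstacle is the negligible case of the successor step — converting ``$\pi_q(\overline V\cap\snt{\alpha})$ has Property ($P$) in $\mathcal{A}_q$'' into Property ($P$) of $\overline V\cap\snt{\alpha}$ in $\C^n$ near $q$. The pull-back handles the $\mathcal{A}_q$-directions for free, but one must add a transverse strictly plurisubharmonic term and pick the constants so that the sum stays in $[0,1]$ while its complex Hessian remains bounded below by a fixed multiple of $M$ in every direction; the mechanism is the positive definiteness of the Levi form on $(T^{1,0}_q b\Omega)\cap\ker\pi_q$, forced by $\mathcal{N}_q\subseteq\mathcal{A}_q$ and pseudoconvexity. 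A secondary, more bookkeeping-type difficulty is to organize the localization and gluing of functions with Property ($P$) across the transfinitely many strata so that only finitely many pieces are active at each level $M$; this is the purpose of the preparatory lemmas in Section \ref{prep}.
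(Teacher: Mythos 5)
Your proof is correct and rests on the same underlying ideas as the paper's, though you package them a bit differently. Both proofs exploit the transfinite stratification $\snt{0}\supseteq\sn{0}\supseteq\snt{1}\supseteq\cdots$, handle the two kinds of layers by the same two mechanisms — points of $\snt{\alpha}\setminus\sn{\alpha}$ are discharged by pulling back plurisubharmonic functions along the projection $\pi_q$ and adding a term built from $\rho$ (the paper uses $\lambda=\mu\circ\pi+A\rho+B\rho^2$ with $A,B$ chosen by a compactness/contradiction argument, matching your sketch), while points of $\sn{\alpha}\setminus\snt{\alpha+1}$ are discharged via a locally containing submanifold of holomorphic dimension zero — and rely on Sibony's localization principle (countable unions of compacta with Property ($P$) have Property ($P$)) to glue. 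The organizational difference is that the paper first proves the decomposition $b\Omega = K_{-1}\cup\bigcup_{\beta\le\gamma}(\snt{\beta}\setminus\snt{\beta+1})\cup S_{\mathcal{M}\mathfrak{C}}$ (Lemma~\ref{lemma about main decomposition of boundary}, itself by transfinite induction, with the limit step using $\snt{\lambda}=\bigcap_{\beta<\lambda}\snt{\beta}$ from Lemma~\ref{limit ordinal support intersection lemma}) and then invokes the countable union property once for the whole boundary, whereas you fold the decomposition and the gluing into a single transfinite induction on the quantified statement $(\Psi_\alpha)$. This buys you a slightly more self-contained ``propagation'' argument at the cost of carrying the universal quantifier over compacta $K\subseteq\snt{0}$ through the induction; the paper's version isolates the set-theoretic bookkeeping into a clean decomposition lemma. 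The one place where your sketch is less precise than the paper is the negligible-point step: the paper makes the continuity hypothesis explicit by choosing $V_p$ so that $\sum_{s=1}^m|u_s|^2\ge\tfrac12|u|^2$ for all $u\in\mathcal{N}_\xi$, $\xi\in\overline{V_p}\cap b\Omega$ (not just at $q$), and then extracts $A_M,B_M$ by two compactness arguments; your phrasing ``positive definite on $(T^{1,0}_q b\Omega)\cap\ker\pi_q$'' is a statement at the single point $q$ and needs exactly this uniformization over $\overline{V_p}$ to work.
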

\noindent Since compact subsets of $b\Omega$ will inherit Property ($P$) from $b\Omega$ if it satisfies Property ($P$), whether or not the support of $\mathcal{M}\mathfrak{C}_{\mathcal{A}}$ satisfies Property ($P$) does not depend on the choice of closed distribution $\mathcal{A}$.  Consequently, the following corollary follows:

\begin{corollary}\label{corollary - property p holds for any closed distribution between the levi null space and the complex tangent space}
   Let $\Omega$ be a smooth bounded pseudoconvex domain and $\mathcal{A}$ and $\mathcal{B}$ be closed distributions such that $\mathcal{N} \subseteq \mathcal{A}, \mathcal{B} \subseteq T^{1, 0}b\Omega$. If the support of $\mathcal{M}\mathfrak{C}_{\mathcal{A}}$ satisfies Property ($P$), then the support of $\mathcal{M}\mathfrak{C}_{\mathcal{B}}$ satisfies Property ($P$) as well.
\end{corollary}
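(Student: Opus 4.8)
The plan is to derive this as an immediate consequence of Theorem \ref{the main result.  Property (P) holds on the support of the new core if and only if it holds on the boundary of the domain}, together with the elementary fact that Property ($P$) is inherited by compact subsets. First I would note that $\mathcal{M}\mathfrak{C}_{\mathcal{B}} = \mathcal{N}_{\gamma, \mathcal{B}}$ is a closed distribution by Lemma \ref{N_alpha is a closed distribution} (whose proof, as remarked in the text, does not depend on the choice of the distribution in the role of $\mathcal{A}$), so its support is a closed subset of $b\Omega$; since $\Omega$ is a bounded domain with smooth boundary, $b\Omega$ is compact, and hence so is the support of $\mathcal{M}\mathfrak{C}_{\mathcal{B}}$.

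Next, assuming the support of $\mathcal{M}\mathfrak{C}_{\mathcal{A}}$ satisfies Property ($P$), I would apply the direction of Theorem \ref{the main result.  Property (P) holds on the support of the new core if and only if it holds on the boundary of the domain} that reads ``if the support of $\mathcal{M}\mathfrak{C}_{\mathcal{A}}$ satisfies Property ($P$), then $b\Omega$ satisfies Property ($P$)'' (applied with the closed distribution $\mathcal{A}$) to conclude that $b\Omega$ itself satisfies Property ($P$). Finally, since the support of $\mathcal{M}\mathfrak{C}_{\mathcal{B}}$ is a compact subset of $b\Omega$, any $C^2$ plurisubharmonic function on a neighborhood of $b\Omega$ with values in $[0,1]$ whose complex Hessian is bounded below by $M|u|^2$ restricts to such a function on a (smaller) neighborhood of the support of $\mathcal{M}\mathfrak{C}_{\mathcal{B}}$; hence this support inherits Property ($P$) from $b\Omega$, which is the assertion.

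There is no genuine obstacle here: all of the substance is packaged in Theorem \ref{the main result.  Property (P) holds on the support of the new core if and only if it holds on the boundary of the domain}, and the corollary is obtained simply by chaining its two directions through the common hypothesis ``$b\Omega$ satisfies Property ($P$)'' and using that Property ($P$) descends along inclusions of compact sets. The only points to verify carefully are that the support of $\mathcal{M}\mathfrak{C}_{\mathcal{B}}$ is indeed compact (closedness of the distribution plus compactness of $b\Omega$) and that it is legitimate to apply the theorem with $\mathcal{B}$ in the role of $\mathcal{A}$, both of which are routine given Lemma \ref{N_alpha is a closed distribution} and the remark preceding it that the construction's lemmas hold for an arbitrary closed distribution between $\mathcal{N}$ and $T^{1,0}b\Omega$.
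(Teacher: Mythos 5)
Your proposal is correct and is essentially the paper's own argument: the paper derives the corollary by observing that Theorem \ref{the main result.  Property (P) holds on the support of the new core if and only if it holds on the boundary of the domain} gives Property ($P$) on $b\Omega$ from the hypothesis on $S_{\mathcal{M}\mathfrak{C}_{\mathcal{A}}}$, and that compact subsets of $b\Omega$ then inherit Property ($P$). The additional remarks you make about compactness of $S_{\mathcal{M}\mathfrak{C}_{\mathcal{B}}}$ and the applicability of the theorem with $\mathcal{B}$ in place of $\mathcal{A}$ are the right points to check and are handled correctly.
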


\begin{remark}
    The core of the Levi distribution $\core(\mathcal{N})$ defined by Dall'Ara and Mongodi \cite{Dall'AraMongodi21} is the first repeated distribution in the decreasing transfinite sequence 
    $$
    \mathcal{N} \supseteq \mathcal{N}^{1} \supseteq \cdots \supseteq\mathcal{N}^{\alpha} \supseteq \cdots,
    $$
    where $\mathcal{N}^{\alpha}$ is defined as in \cite{Dall'AraMongodi21}.  By comparing the algorithm presented in this paper with the one in \cite{Dall'AraMongodi21}, it is immediate that for any closed distribution $\mathcal{A}$ with $\mathcal{N} \subseteq \mathcal{A} \subseteq T^{1, 0}b\Omega$, for all ordinals $\alpha$, $ \mathcal{N}_{\alpha} \subseteq \mathcal{N}^{\alpha}$.  Thus, $\mathcal{M}\mathfrak{C}_{\mathcal{A}} \subseteq \core(\mathcal{N})$ and $S_{\mathcal{M}\mathfrak{C}_{\mathcal{A}}} \subseteq S_{\core(\mathcal{N})}$ (see the proof of Lemma \ref{stronger2}). In Example \ref{bigcore} in Section \ref{HartogsinC2}, we give examples that show that $S_{\mathcal{M}\mathfrak{C}_{\mathcal{A}}}$ may be strictly smaller than $S_{\core(\mathcal{N})}$.  
\end{remark}

\begin{remark}
    In \cite{Tr22}, it is proved that Property ($P$) holds on $b\Omega$ if Property ($P$) holds on $S_{\core(\mathcal{N})}$. Since $S_{\mathcal{M}\mathfrak{C}_{\mathcal{A}}} \subseteq S_{\mathfrak{C}(\mathcal{N})}$, if Property ($P$) holds on $S_{\mathfrak{C}(\mathcal{N})}$, it also holds on $S_{\mathcal{M}\mathfrak{C}_{\mathcal{A}}}$. Thus, Theorem \ref{the main result.  Property (P) holds on the support of the new core if and only if it holds on the boundary of the domain} improves the result in \cite{Tr22}. 
\end{remark}

\section{Additional preparatory lemmas}\label{prep}
In this section we prove additional lemmas that will be needed for the proof of Theorem \ref{the main result.  Property (P) holds on the support of the new core if and only if it holds on the boundary of the domain}. These lemmas are similar in nature to Lemmas 2.5, 2.7, 2.8 in \cite{Tr22}, which are for the (unmodified) Levi core.
\begin{lem}\label{rewriting the definition of derived distributions}
Let $\Omega$ be a smooth bounded pseudoconvex domain and $p \in b\Omega$.  For any ordinal $\alpha$,
\begin{equation}\label{Characterization of tilde N alpha in terms of N tilde 0}
\wt{\mathcal{N}}_{\alpha} = \begin{cases}
\wt{\mathcal{N}}_0 & \alpha = 0
\\
\wt{\mathcal{N}}_0 \cap T^{1,0}\sn{\alpha - 1} & \alpha \hbox{ successor ordinal} 
\\
 \wt{\mathcal{N}}_0 \cap \bigcap\limits_{\beta < \alpha} T^{1, 0}S_{\mathcal{N}_{\beta}}  & \alpha \hbox{ limit ordinal.}
\end{cases}
\end{equation}
\end{lem}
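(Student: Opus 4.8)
The plan is to argue by transfinite induction on $\alpha$, using the recursive definitions \eqref{def3}, \eqref{def4}, \eqref{defineN}, the nesting $\mathcal{N}_\beta \subseteq \widetilde{\mathcal{N}}_\beta$ from \eqref{def intertwined Ns}, and two elementary properties of the tangent distribution operator $A \mapsto T^{1,0}A$: it is monotone in $A$ (if $A\subseteq B$ then $T^{1,0}_pA\subseteq T^{1,0}_pB$, since every $f\in C^{\infty}(b\Omega)$ vanishing on $B$ vanishes on $A$), and $T^{1,0}_pA=\{0\}$ whenever $p\notin\overline A$ (take a smooth bump function supported in a neighborhood of $p$ disjoint from $\overline A$ and with prescribed, nonzero, differential at $p$). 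From Lemma \ref{N_alpha is a closed distribution} I will also use that each $S_{\mathcal{N}_\beta}=\overline{S_\beta}$ is closed and that $\beta\mapsto S_{\mathcal{N}_\beta}$ is decreasing.

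The base case $\alpha=0$ is just the definition $\widetilde{\mathcal{N}}_0=\mathcal{N}$. For the successor step, write $\alpha=\delta+1$; by \eqref{def3}, $\widetilde{\mathcal{N}}_{\delta+1}=\mathcal{N}_\delta\cap T^{1,0}S_{\mathcal{N}_\delta}$, and the first key step is the intermediate identity
$$
\widetilde{\mathcal{N}}_{\delta+1}=\widetilde{\mathcal{N}}_\delta\cap T^{1,0}S_{\mathcal{N}_\delta}\,.
$$
The inclusion $\subseteq$ is immediate from $\mathcal{N}_\delta\subseteq\widetilde{\mathcal{N}}_\delta$. For $\supseteq$, take $(p,v_p)$ with $v_p\in(\widetilde{\mathcal{N}}_\delta)_p\cap T^{1,0}_pS_{\mathcal{N}_\delta}$; if $v_p=0$ there is nothing to show, and if $v_p\neq 0$ then $T^{1,0}_pS_{\mathcal{N}_\delta}\neq\{0\}$ forces $p\in S_{\mathcal{N}_\delta}$ (the set being closed), so \eqref{defineN} gives $(\mathcal{N}_\delta)_p=(\widetilde{\mathcal{N}}_\delta)_p\ni v_p$. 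Now feed in the induction hypothesis for $\widetilde{\mathcal{N}}_\delta$: if $\delta=0$ this already is the asserted formula for $\alpha=1$; if $\delta=\epsilon+1$ it gives $\widetilde{\mathcal{N}}_{\delta+1}=\widetilde{\mathcal{N}}_0\cap T^{1,0}S_{\mathcal{N}_\epsilon}\cap T^{1,0}S_{\mathcal{N}_\delta}$, and since $S_{\mathcal{N}_\delta}\subseteq S_{\mathcal{N}_\epsilon}$ the factor $T^{1,0}S_{\mathcal{N}_\epsilon}$ is absorbed; and if $\delta$ is a limit, the factor $\bigcap_{\beta<\delta}T^{1,0}S_{\mathcal{N}_\beta}$ is absorbed into $T^{1,0}S_{\mathcal{N}_\delta}$ by the same monotonicity. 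In all three cases one lands on $\widetilde{\mathcal{N}}_{\delta+1}=\widetilde{\mathcal{N}}_0\cap T^{1,0}S_{\mathcal{N}_\delta}$.

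For the limit step, let $\alpha$ be a limit ordinal. By \eqref{def4} and the induction hypothesis,
$$
\widetilde{\mathcal{N}}_\alpha=\bigcap_{\beta<\alpha}\widetilde{\mathcal{N}}_\beta=\widetilde{\mathcal{N}}_0\cap\bigcap_{\sigma}T^{1,0}S_{\mathcal{N}_\sigma}\,,
$$
where $\sigma$ ranges over the indices that appear in the expansions of the $\widetilde{\mathcal{N}}_\beta$, $\beta<\alpha$. The last step is the observation that this index set equals $\{\sigma:\sigma<\alpha\}$: every occurring $\sigma$ satisfies $\sigma<\beta<\alpha$ for some $\beta$; conversely, given $\sigma<\alpha$, the ordinal $\sigma+1$ is a successor with $\sigma+1<\alpha$ (as $\alpha$ is a limit), and $T^{1,0}S_{\mathcal{N}_\sigma}$ occurs in the expansion of $\widetilde{\mathcal{N}}_{\sigma+1}$. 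This yields the limit-case formula and closes the transfinite induction.

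The two properties of $A\mapsto T^{1,0}A$ and the ordinal bookkeeping in the limit step are routine; the only point requiring care is the intermediate identity $\widetilde{\mathcal{N}}_{\delta+1}=\widetilde{\mathcal{N}}_\delta\cap T^{1,0}S_{\mathcal{N}_\delta}$, i.e. that replacing $\mathcal{N}_\delta$ by the a priori larger $\widetilde{\mathcal{N}}_\delta$ costs nothing once one intersects with $T^{1,0}S_{\mathcal{N}_\delta}$ --- this is exactly where the closedness of $S_{\mathcal{N}_\delta}$ (from Lemma \ref{N_alpha is a closed distribution}) and the dichotomy \eqref{defineN} enter.
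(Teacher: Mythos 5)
Your proof is correct and takes essentially the same approach as the paper: transfinite induction, with the successor step reduced via the dichotomy of \eqref{defineN} (you phrase it as $v_p=0$ or $v_p\neq0$; the paper phrases it as $p\in S_{\mathcal{N}_\alpha}$ or $p\notin S_{\mathcal{N}_\alpha}$, which amounts to the same case split), followed by absorbing the earlier tangent factors by monotonicity, and the limit step handled by re-indexing the intersection so that the index set becomes $\{\sigma:\sigma<\alpha\}$. The minor difference is that you isolate the intermediate identity $\widetilde{\mathcal{N}}_{\delta+1}=\widetilde{\mathcal{N}}_\delta\cap T^{1,0}S_{\mathcal{N}_\delta}$ as a standalone step before invoking the inductive hypothesis, whereas the paper carries out these two moves simultaneously; this is purely organizational.
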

\begin{proof}
We prove this claim using transfinite induction.  The base case follows by definition.  Assume that the claim holds for the ordinal $\alpha$.  We will show it is true for its successor.  Suppose first that $\alpha$ is a successor ordinal.  By definition,
$$
\wt{\mathcal{N}}_{\alpha + 1} = \mathcal{N}_{\alpha} \cap T^{1, 0}S_{\mathcal{N}_{\alpha}}.
$$
If $p \in S_{\mathcal{N}_{\alpha}}$, then $(\mathcal{N}_{\alpha})_p = (\wt{\mathcal{N}}_{\alpha})_p$.  Consequently,
\begin{align*}
(\widetilde{\mathcal{N}}_{\alpha + 1})_p &= (\wt{\mathcal{N}}_{\alpha})_p \cap  T_p^{1, 0} \sn{\alpha}
\\
&= (\wt{\mathcal{N}}_{0})_p \cap T_p^{1, 0}\sn{\alpha - 1} \cap T_p^{1, 0}\sn{\alpha}
\\
&= (\cnt{0})_p \cap T_p^{1, 0}\sn{\alpha}
\end{align*}
If $p \not\in \sn{\alpha} = \overline{S_{\alpha}} = \overline{\sn{\alpha}}$, then $T_p^{1, 0}\sn{\alpha} = \{0\}$.  Hence,
$$
(\wt{\mathcal{N}}_{\alpha + 1})_p = (\wt{\mathcal{N}}_{\alpha})_p \cap T_p^{1, 0}\sn{\alpha} = \{0\} = (\wt{\mathcal{N}}_0)_p \cap T_p^{1, 0}\sn{\alpha}.
$$
If $\alpha$ is a limit ordinal, the argument is analogous, replacing $T^{1,0}_{p}S_{\mathcal{N}_{\alpha-1}}$ by $\cap_{\beta<\alpha}T^{1,0}_{p}S_{\mathcal{N}_{\beta}}$. Thus, we have shown that if \eqref{Characterization of tilde N alpha in terms of N tilde 0} holds for an ordinal $\alpha$, then it holds for its successor.  

It remains to show that if $\alpha$ is a limit ordinal and \eqref{Characterization of tilde N alpha in terms of N tilde 0} holds for all ordinals $\beta < \alpha$, then \eqref{Characterization of tilde N alpha in terms of N tilde 0} holds for the ordinal $\alpha$.  Let $A = \{\beta: \beta < \alpha \hbox{ is a successor ordinal}\}.$ Notice that
$$
\wt{\mathcal{N}}_{\alpha} = \bigcap_{\beta<\alpha}\widetilde{\mathcal{N}_{\beta}} = \bigcap\limits_{\beta \in A} \wt{\mathcal{N}}_{\beta} = \bigcap\limits_{\beta \in A} (\wt{\mathcal{N}}_0 \cap T^{1,0}\sn{\beta}) = \wt{\mathcal{N}}_0 \cap \bigcap_{\beta \in A} T^{1, 0}\sn{\beta} = \wt{\mathcal{N}}_0 \cap \bigcap_{\beta < \alpha} T^{1, 0}\sn{\beta}.
$$
\end{proof}
\begin{lem}\label{limit ordinal support intersection lemma}
    For any limit ordinal $\alpha$, we have
    $$ \snt{\alpha} = \bigcap_{\beta<\alpha} \snt{\beta}.$$
\end{lem}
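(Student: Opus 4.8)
The plan is to prove the two inclusions separately, using the characterization of $\widetilde{\mathcal{N}}_\alpha$ provided by Lemma \ref{rewriting the definition of derived distributions} together with the monotonicity established in Lemma \ref{N_alpha is a closed distribution}. Recall that $\snt{\alpha} = \{p : (\widetilde{\mathcal{N}}_\alpha)_p \neq \{0\}\}$. The inclusion $\snt{\alpha} \subseteq \bigcap_{\beta<\alpha}\snt{\beta}$ is the easy direction: since $\widetilde{\mathcal{N}}_\alpha = \bigcap_{\beta<\alpha}\widetilde{\mathcal{N}}_\beta$, if $(\widetilde{\mathcal{N}}_\alpha)_p \neq \{0\}$ then $(\widetilde{\mathcal{N}}_\beta)_p \supseteq (\widetilde{\mathcal{N}}_\alpha)_p \neq \{0\}$ for every $\beta < \alpha$, so $p \in \snt{\beta}$ for all such $\beta$. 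This uses only definition \eqref{def4}, no closedness.

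For the reverse inclusion, suppose $p \in \bigcap_{\beta<\alpha}\snt{\beta}$, so $(\widetilde{\mathcal{N}}_\beta)_p \neq \{0\}$ for every $\beta < \alpha$; I must show $(\widetilde{\mathcal{N}}_\alpha)_p = \bigcap_{\beta<\alpha}(\widetilde{\mathcal{N}}_\beta)_p \neq \{0\}$. The subspaces $(\widetilde{\mathcal{N}}_\beta)_p$ form a decreasing chain of nonzero subspaces of the finite-dimensional space $T_p^{1,0}b\Omega \subseteq \mathbb{C}^n$; a decreasing chain of subspaces of a finite-dimensional vector space must stabilize after finitely many strict drops. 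Thus there is some $\beta_0 < \alpha$ such that $(\widetilde{\mathcal{N}}_\beta)_p = (\widetilde{\mathcal{N}}_{\beta_0})_p$ for all $\beta$ with $\beta_0 \le \beta < \alpha$; here I need $\alpha$ to be a limit ordinal so that such $\beta$ exist beyond any given index and the intersection over all $\beta<\alpha$ equals this common value. Hence $(\widetilde{\mathcal{N}}_\alpha)_p = (\widetilde{\mathcal{N}}_{\beta_0})_p \neq \{0\}$, so $p \in \snt{\alpha}$.

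The one subtlety to be careful about is whether the chain of fibers at a fixed point actually ``stabilizes'' in the right sense: a priori, a decreasing transfinite chain of subspaces of $\mathbb{C}^n$ can strictly decrease only finitely many times (since dimension strictly drops at each strict step and starts at most $n$), so the set of indices $\beta < \alpha$ at which a strict drop occurs is finite; taking $\beta_0$ to be larger than all of them (possible since $\alpha$ is a limit ordinal, so has no largest predecessor and is infinite) gives the claimed stabilization. This is the crux of the argument, but it is elementary linear algebra combined with the fact that $\alpha$ is a limit ordinal; no appeal to closedness of the distributions or to Lemma \ref{rewriting the definition of derived distributions} is strictly needed, though the latter gives an alternative route via $\widetilde{\mathcal{N}}_\alpha = \widetilde{\mathcal{N}}_0 \cap \bigcap_{\beta<\alpha} T^{1,0}\sn{\beta}$ and the analogous stabilization of the $T_p^{1,0}\sn{\beta}$. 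I do not anticipate any serious obstacle; the main point is simply to phrase the finite-stabilization argument cleanly.
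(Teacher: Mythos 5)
Your proof is correct, but it takes a genuinely different route from the paper's. The paper argues via compactness: it intersects each fiber $(\widetilde{\mathcal{N}}_\beta)_p$ with the unit sphere of $T_p^{1,0}b\Omega$, obtaining a nested transfinite family of non-empty compact sets, and invokes the finite intersection property to produce a unit vector $X_\infty \in \bigcap_{\beta<\alpha}(\widetilde{\mathcal{N}}_\beta)_p = (\widetilde{\mathcal{N}}_\alpha)_p$. You instead argue via dimensions: since $\beta \mapsto \dim(\widetilde{\mathcal{N}}_\beta)_p$ is a non-increasing ordinal-indexed function into $\{0,\dots,n\}$, it attains its minimum at some $\beta_0 < \alpha$, after which the fibers are forced to be equal, so the intersection equals $(\widetilde{\mathcal{N}}_{\beta_0})_p \neq \{0\}$. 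Both arguments are elementary and confined to a single fiber; yours gives the slightly sharper information that the chain of fibers actually stabilizes below $\alpha$, whereas the paper's compactness argument is a one-liner that would also work without finite-dimensionality (for nested closed subspaces of a reflexive Banach space, say). Your phrasing in terms of ``the set of indices at which a strict drop occurs is finite'' is sound, though it is cleaner (and sidesteps having to define ``drop index'' precisely at limit ordinals) to argue directly with $\min_{\beta<\alpha}\dim(\widetilde{\mathcal{N}}_\beta)_p$ as above; you also do not actually need $\alpha$ to be a limit ordinal for the stabilization itself, only to make the identity $\widetilde{\mathcal{N}}_\alpha = \bigcap_{\beta<\alpha}\widetilde{\mathcal{N}}_\beta$ available from definition \eqref{def4}.
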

\begin{proof}
    Clearly $\snt{\alpha} \subseteq \bigcap_{\beta<\alpha} \snt{\beta}$. On the other hand if $p\in \bigcap_{\beta<\alpha} \snt{\beta}$, then consider
    $$ (\cu\cnt{\beta})_p := (\cnt{\beta})_p\cap\left\{ X\in T_p^{1,0}b\Omega\ :\ \left\| X \right\| = 1 \right\}. $$
    Then $\{ (\cu\cnt{\beta})_p \}_{\beta<\alpha}$ is a decreasing transfinite sequence of non-empty compact subsets of the unit sphere bundle in $T_p^{1,0}b\Omega$. So there exists 
    $$ X_\infty\in\bigcap_{\beta<\alpha} (\cu\cnt{\beta})_p\subseteq\bigcap_{\beta<\alpha} (\cnt{\beta})_p = (\cnt{\alpha})_p $$
    with $\|X_\infty\|=1$. Hence $p\in\snt{\alpha}$ and the lemma is proved. 
\end{proof}
\begin{lem}\label{lemma about main decomposition of boundary}
    Let $\Omega$ be a smooth bounded pseudoconvex domain. The boundary $b\Omega$ decomposes as
    \begin{equation}\label{equation of main decomposition of boundary}
        b\Omega = K_{-1} \cup \left( \bigcup_{0\le \beta \leq \gamma} \snt{\beta}\setminus \snt{\beta+1} \right) \cup S_{\mathcal{M}\mathfrak{C}},
    \end{equation}
    where $K_{-1}$ is the set of strictly pseudoconvex points of $b\Omega$, and $\gamma$ is a countable ordinal such that $\mathcal{M}\mathfrak{C} = \mathcal{N_{\gamma}} = \mathcal{N}_{\gamma + 1}$.
\end{lem}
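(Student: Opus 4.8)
The plan is to show that the right-hand side of \eqref{equation of main decomposition of boundary} exhausts $b\Omega$ and that the union is disjoint. First I would observe that $\snt{0} = S_{\mathcal{N}}$ is exactly $b\Omega \setminus K_{-1}$, since a point is strictly pseudoconvex precisely when the Levi form is nondegenerate there, i.e.\ $\mathcal{N}_p = \{0\}$. Thus $b\Omega = K_{-1} \cup \snt{0}$, and it remains to decompose $\snt{0}$. By Lemma \ref{stabilization} and the definition \eqref{the definition of the core} of $\mathcal{M}\mathfrak{C}$, we have $\snt{\beta} = S_{\mathcal{M}\mathfrak{C}}$ for all $\beta \geq \gamma$ (using $\sn{\alpha} = \overline{S_\alpha}$ from Lemma \ref{N_alpha is a closed distribution} together with \eqref{stable}), so the chain of supports $\snt{0} \supseteq \snt{1} \supseteq \cdots$ stabilizes at $S_{\mathcal{M}\mathfrak{C}}$.

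Next I would write the telescoping identity. For any point $p \in \snt{0} \setminus S_{\mathcal{M}\mathfrak{C}}$, since the transfinite sequence $\{\snt{\beta}\}$ is decreasing (Lemma \ref{N_alpha is a closed distribution}), eventually reaches $S_{\mathcal{M}\mathfrak{C}}$ at $\beta = \gamma$, and $p \notin \snt{\gamma}$, there is a least ordinal $\beta_0 \leq \gamma$ with $p \notin \snt{\beta_0 + 1}$ but $p \in \snt{\beta_0}$. To see that such a least $\beta_0$ exists and lies in the range $0 \le \beta_0 \le \gamma$, one uses that $p \in \snt{0}$, that $p \notin \snt{\gamma}$ (since $\snt{\gamma} = S_{\mathcal{M}\mathfrak{C}}$ and $p \notin S_{\mathcal{M}\mathfrak{C}}$), and well-ordering of the ordinals; at limit ordinals $\alpha$ one invokes Lemma \ref{limit ordinal support intersection lemma} to rule out $\alpha$ itself being the first place $p$ is dropped, since $\snt{\alpha} = \bigcap_{\beta<\alpha}\snt{\beta}$ means $p \notin \snt{\alpha}$ would force $p \notin \snt{\beta}$ for some $\beta < \alpha$. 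Hence $p \in \snt{\beta_0} \setminus \snt{\beta_0+1}$ for that $\beta_0$, which shows the containment $b\Omega \subseteq K_{-1} \cup \big(\bigcup_{0 \le \beta \le \gamma} \snt{\beta}\setminus\snt{\beta+1}\big) \cup S_{\mathcal{M}\mathfrak{C}}$. The reverse containment is trivial since every piece on the right is a subset of $b\Omega$.

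Finally I would check disjointness. The set $K_{-1}$ is disjoint from $\snt{0}$, hence from every $\snt{\beta}$ and from $S_{\mathcal{M}\mathfrak{C}} = \snt{\gamma}$. The sets $\snt{\beta}\setminus\snt{\beta+1}$ for distinct $\beta$ are pairwise disjoint by monotonicity of $\{\snt{\beta}\}$: if $\beta < \beta'$ then $\snt{\beta+1} \supseteq \snt{\beta'}$, so any point in $\snt{\beta'}\setminus\snt{\beta'+1}$ already lies in $\snt{\beta+1}$ and thus not in $\snt{\beta}\setminus\snt{\beta+1}$. And each $\snt{\beta}\setminus\snt{\beta+1}$ is disjoint from $S_{\mathcal{M}\mathfrak{C}} = \snt{\gamma} \subseteq \snt{\beta+1}$ (using $\beta + 1 \le \gamma$ when $\beta < \gamma$, and $\snt{\gamma+1} = \snt{\gamma}$ when $\beta = \gamma$, so that $\snt{\gamma}\setminus\snt{\gamma+1} = \emptyset$, which is vacuously disjoint from everything). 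I do not expect a serious obstacle here; the only point requiring care is the transfinite bookkeeping at limit ordinals, where Lemma \ref{limit ordinal support intersection lemma} is exactly the tool needed to guarantee that a point leaving the nested family of supports does so at a successor stage.
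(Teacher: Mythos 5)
Your proof is correct and essentially the same as the paper's: both rely on the stabilization $\snt{\beta}=S_{\mathcal{M}\mathfrak{C}}$ for $\beta\ge\gamma$ and on Lemma \ref{limit ordinal support intersection lemma} to rule out a point leaving the nested supports at a limit stage, the only stylistic difference being that you argue pointwise via well-ordering of the ordinals where the paper proves the telescoping identity $\snt{0}=\bigcup_{0\le\beta<\alpha}(\snt{\beta}\setminus\snt{\beta+1})\cup\snt{\alpha}$ by transfinite induction and then specializes to $\alpha=\gamma+1$. Your closing disjointness check is correct but superfluous, since the lemma asserts only a set equality, not a partition.
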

\begin{proof}
    We will first show by transfinite induction that for any ordinal $\alpha$,
    \begin{equation}\label{decomposition of weakly-pseudoconvex points}
        \snt{0} = \bigcup_{0\le\beta<\alpha} \left( \snt{\beta}\setminus\snt{\beta+1}\right)\cup \snt{\alpha}.
    \end{equation}
We omit the base case and successor ordinal cases and show only the limit ordinal case.  
Suppose that $\alpha$ is a limit ordinal and that \eqref{decomposition of weakly-pseudoconvex points} holds for all ordinals $\delta$ with $1 \leq \delta<\alpha$, i.e.,
    \begin{equation}\label{claim holding for delta}
        \snt{0} = \bigcup_{0\le\beta<\delta} \left( \snt{\beta}\setminus\snt{\beta+1}\right)\cup \snt{\delta}.
    \end{equation}
We will show that \eqref{claim holding for delta} also holds for $\delta = \alpha$. It is clear that
$$ \snt{0} \supseteq \bigcup_{0\le\beta<\alpha} \left( \snt{\beta}\setminus\snt{\beta+1}\right)\cup \snt{\alpha}. $$
On the other hand, let $p\in\snt{0}$ be such that $p\not\in \bigcup\limits_{0\le\beta<\alpha} \left( \snt{\beta}\setminus\snt{\beta+1}\right)$. Then for $1 \leq \delta<\alpha$, $p\not\in \bigcup\limits_{0\le\beta<\delta} \left( \snt{\beta}\setminus\snt{\beta+1}\right)$. By \eqref{claim holding for delta}, $p\in\snt{\delta}$ for all $1 \leq \delta<\alpha$. By Lemma \ref{limit ordinal support intersection lemma}, 
    $$
    p\in\bigcap_{1\le\delta<\alpha} \snt{\delta} = \bigcap_{0\le\delta<\alpha} \snt{\delta} = \snt{\alpha}.
    $$
This proves \eqref{decomposition of weakly-pseudoconvex points}. Now apply \eqref{decomposition of weakly-pseudoconvex points} with $\alpha = \gamma + 1$ and observe that $S_{\widetilde{\mathcal{N}}_{\gamma + 1}}=S_{\mathcal{N}_{\gamma}}=S_{\mathcal{M}\mathfrak{C}}$ (by \eqref{def intertwined Ns}; note that by assumption, $\mathcal{N}_{\gamma}=\mathcal{N}_{\gamma+1}$).
Therefore, \eqref{equation of main decomposition of boundary} holds and the proof is complete.
\end{proof}

\section{Proof of Theorem \ref{the main result.  Property (P) holds on the support of the new core if and only if it holds on the boundary of the domain}}\label{main theorem}

In this section, we prove the main result of the paper, Theorem \ref{the main result.  Property (P) holds on the support of the new core if and only if it holds on the boundary of the domain}.
\begin{proof}
For ease of notation, we will suppress the subscript $\mathcal{A}$ on $\mathcal{M}\mathfrak{C}_{\mathcal{A}}$ and other distributions and sets. If $b\Omega$ satisfies Property ($P$), then any compact subset, in particular $S_{\mathcal{M}\mathfrak{C}}$ satisfies Property ($P$).  We now assume that $S_{\mathcal{M}\mathfrak{C}}$ satisfies Property ($P$) and show that $b\Omega$ satisfies Property ($P$). Let $\gamma$ be an ordinal such that $\mathcal{M}\mathfrak{C} = \mathcal{N}_{\gamma} = \mathcal{N}_{\gamma + 1}$. Since by Lemma \ref{lemma about main decomposition of boundary},
\begin{equation}\label{boundarydecomp}
b\Omega = K_{-1} \cup \left( \bigcup_{0\le \beta\le \gamma} \snt{\beta}\setminus \snt{\beta+1} \right) \cup S_{\mathcal{M}\mathfrak{C}} 
\end{equation}
is a countable union of sets, it suffices to show that each set in \eqref{boundarydecomp} can be covered by countably many compact subsets of $b\Omega$ that satisfy Property ($P$) (\cite{Sibony87b}, Proposition 1.9; see also \cite{Straube10}, Corollary 4.14). This is clear for $K_{-1}$ and $S_{\mathcal{M}\mathfrak{C}}$. Because $S_{\widetilde{\mathcal{N}}_{\beta+1}} \subseteq\sn{\beta}\subseteq S_{\widetilde{\mathcal{N}}_{\beta}}$, we have $S_{\widetilde{\mathcal{N}}_{\beta}}\setminus S_{\widetilde{\mathcal{N}}_{\beta+1}} = (\snt{\beta}\setminus\sn{\beta}) \cup (\sn{\beta}\setminus\snt{\beta + 1})$. We now show the required covers also exist for $\snt{\alpha}\setminus\sn{\alpha}$ and $\sn{\alpha}\setminus\snt{\alpha + 1}$, where $\alpha$ is an arbitrary ordinal. Without loss of generality, suppose below that these sets are nonempty.

Let $p \in \snt{\alpha}\setminus\sn{\alpha}$. Then $p\notin S_{\alpha}\subseteq\overline{S_{\alpha}}$, so $p$ is negligible with respect to $\mathcal{A}$ and $\widetilde{\mathcal{N}}_{\alpha}$. Select a coordinate neighborhood $U \subseteq \mathbb{C}^n$ centered at $p$ and, via a linear coordinate change, coordinates $(\xi_1,\ldots,\xi_n)$ that satisfy that $\mathcal{N}_{p}=\{\xi_{m+1}=\cdots=\xi_{n}=0\}$, $\mathcal{A}_p = \{\xi_{t + 1} = \cdots = \xi_n = 0\}$ for some $t \geq m$ and $\pi_{p}$ becomes the orthogonal projection onto the $(\xi_{1}, \ldots,\xi_{t})$--subspace $\mathcal{A}_{p}$. Since $p \not\in \overline{S_{\alpha}} = \sn{\alpha} = \overline{\sn{\alpha}}$, after shrinking $U$, one may suppose that $U \cap \sn{\alpha} = \emptyset$ and for any neighborhood $V \ssubset U$ of $p$ that $\pi_p(\overline{V}\cap\snt{\alpha})$ satisfies 
Property ($P$). We have used here that a complex linear change of coordinates which turns the projection $\pi_{p}$ into an orthogonal projection does not affect whether or not the projection of a set has Property ($P$). Fix a neighborhood $V_p \ssubset U$ of $p$ small enough such that for $\xi\in\overline{V_{p}}\cap b\Omega$, if $u\in\mathcal{N}_{\xi}$, then $\sum_{s=1}^{m}|u_{s}|^{2}\geq (1/2)|u|^{2}$. Such a neighborhood exists, otherwise, a sequence of points $(\xi)_{j}$ and unit vectors $u_{j}\in\mathcal{N}_{(\xi)_{j}}$ would lead to a unit vector $u\in\mathcal{N}_{p}$ with $\sum_{s=1}^{m}|u_{s}|^{2}\leq (1/2)|u|^{2}$, a contradiction. Rescaling $\rho$ if necessary, we may also assume that $\sum_{j=1}^{n}|(\partial\rho/\partial\xi_{j})|^{2}\geq (1/2)$ on $V_{p}$.

Now fix $M>0$. Since $\pi_p(\overline{V_p} \cap \snt{\alpha})$ satisfies Property ($P$), there exists a $C^2$-plurisubharmonic function $\mu$ on a neighborhood $W \subseteq \mathcal{A}_p$ of $\pi_p(\overline{V_p}\cap\snt{\alpha})$ with 
$$
0 \leq \mu \leq 1 \hbox{ and } \sum_{j, k  = 1}^t {\partial^2\mu \over \partial \xi_j\partial \overline{\xi}_k}(\xi)u_j\overline{u_k} \geq M|u|^2, \quad \xi \in W,\quad u \in \mathbb{C}^t.
$$
Set
\begin{equation}\label{definelambda}
\lambda(\xi_1,\ldots,\xi_n) = \mu(\xi_{1},\ldots,\xi_{t}) + A\rho(\xi_{1},\ldots, \xi_{n}) + B\rho^2(\xi_1,\ldots, \xi_n).
\end{equation}
$\lambda$ is defined in a neighborhood of $\overline{V_{p}}\cap S_{\widetilde{\mathcal{N}}_{\alpha}}$; $A$ and $B$ will be chosen later in order to produce a big Hessian.

Let $u\in \mathbb{C}^{n}$. We have for $\xi \in b\Omega$,
\begin{multline}\label{Hessian1}
 \sum_{j,k=1}^{n}\frac{\partial^{2}\lambda}{\partial\xi_{j}\partial\overline{\xi_{k}}}(\xi)u_{j}\overline{u_{k}} = \sum_{j,k=1}^{t}\frac{\partial^{2}\mu}{\partial\xi_{j}\partial\overline{\xi_{k}}}(\xi)u_{j}\overline{u_{k}} + A\sum_{j,k=1}^{n}\frac{\partial^{2}\rho}{\partial\xi_{j}\partial\overline{\xi_{k}}}(\xi)u_{j}\overline{u_{k}} + 2B\left|\sum_{j=1}^{n}\frac{\partial\rho}{\partial\xi_{j}}(\xi)u_{j}\right|^{2} \\
 \geq M\sum_{s=1}^{m}|u_{s}|^{2} + A\sum_{j,k=1}^{n}\frac{\partial^{2}\rho}{\partial\xi_{j}\partial\overline{\xi_{k}}}(\xi)u_{j}\overline{u_{k}} + 2B\left|\sum_{j=1}^{n}\frac{\partial\rho}{\partial\xi_{j}}(\xi)u_{j}\right|^{2}\;.
\end{multline}
We claim that for $A$ big enough (depending on $M$), the right hand side of \eqref{Hessian1} is at least $(M/3)|u|^{2}$ for $u\in T^{1,0}(b\Omega)$. Assume this does not hold. Then there exists a sequence $\{(q^{l},u^{l})\}_{l=1}^{\infty}$ with $q^{l} \in \overline{V_{p}}\cap S_{\widetilde{\mathcal{N}}_{\alpha}}$ and a unit vector $u^{l}\in T^{1,0}_{q^{l}}(b\Omega)$ such that 
\begin{equation}\label{Hessian2}
 M\sum_{s=1}^{m}|u^{l}_{s}|^{2} + l\sum_{j,k=1}^{n}\frac{\partial^{2}\rho}{\partial\xi_{j}\partial\overline{\xi_{k}}}(q^{l})u^{l}_{j}\overline{u^{l}_{k}}  < \frac{M}{3}\;.
\end{equation}
By compactness, there is a subsequence that converges to $(q,u)$, $q\in \overline{V_{p}}\cap S_{\widetilde{\mathcal{N}}_{\alpha}}$ and the unit vector $u\in T^{1,0}_{q}(b\Omega)$. Because $\Omega$ is pseudoconvex, the Hessian term in \eqref{Hessian2} is nonnegative for all $l$. This implies two things. First, $M\sum_{s=1}^{m}|u^{l}_{s}|^{2}< \frac{M}{3}$ for all $l$ and therefore $M\sum_{s=1}^{m}|u_{s}|^{2}\leq \frac{M}{3}$, that is $\sum_{s=1}^{m}|u_{s}|^{2}\leq \frac{1}{3}$. Second, $\sum_{j,k=1}^{n}\frac{\partial^{2}\rho}{\partial\xi_{j}\partial\overline{\xi_{k}}}(q^{l})u^{l}_{j}\overline{u^{l}_{k}}\rightarrow 0$ as $l\rightarrow\infty$, and therefore $u\in \mathcal{N}_{q}$. This contradicts our assumption on $V_{p}$. Note that for $A=A_{M}$ chosen as above, $M\sum_{s=1}^{m}|u_{s}|^{2} + A\sum_{j,k=1}^{n}\frac{\partial^{2}\rho}{\partial\xi_{j}\partial\overline{\xi_{k}}}(\xi)u_{j}\overline{u_{k}} \geq C_{M}|u|^{2}$ for some (possibly negative) constant $C_{M}$, $(\xi,u)\in (\overline{V_{p}}\cap S_{\widetilde{\mathcal{N}}_{\alpha}})\times \mathbb{C}^{n}$ (by homogeneity, $C_{M}$ is the minimum over $(\overline{V_{p}}\cap S_{\widetilde{\mathcal{N}}_{\alpha}})\times \mathbb{S}^{2n-1}$). An argument analogous to the previous one now shows that a constant $B=B_{M}$ can be chosen big enough so that the right hand side of \eqref{Hessian1} is at least $(1/3)M|u|^{2}$.
So on $\overline{V_p}\cap\snt{\alpha}$, the Hessian of $\lambda$ is at least $(M/3)$ and $0\leq \lambda \leq 1$. Since $M$ was arbitrarily chosen, $\overline{V_p}\cap\snt{\alpha}$ satisfies Property ($P$). The open sets $V_{p}$ cover $\snt{\alpha}\setminus\sn{\alpha}$. If $V_{p_{j}}$, $j\in\mathbb{N}$, is a countable subcover, then the sets $\overline{V_{p_{j}}}\cap\snt{\alpha}$ provide a countable cover of $\snt{\alpha}\setminus\sn{\alpha}$ by compact sets satisfying Property ($P$).

\smallskip

It remains to show that for any ordinal $\alpha$, $\sn{\alpha}\setminus\snt{\alpha + 1}$ can be covered by countably many compact sets that satisfy Property ($P$).  Let $p \in \sn{\alpha}\setminus\snt{\alpha + 1}$.  By Lemma \ref{rewriting the definition of derived distributions}, since $p \not\in \snt{\alpha + 1}$,
$$
\{0\} = (\wt{\mathcal{N}}_{\alpha + 1})_p = (\wt{\mathcal{N}}_0)_p \cap T_p^{1, 0}\sn{\alpha}.
$$
By Proposition 2.6(c) in \cite{Dall'AraMongodi21}, there exists a real embedded submanifold $V_p$ of $b\Omega$ such that 
$$
T_p^{1, 0}V_p = T_p^{1, 0}\sn{\alpha}
$$
and $V_p$ locally contains $\sn{\alpha}$ (hence $\sn{\alpha}\setminus\snt{\alpha + 1}$) near $p$; that is, there exists $r_p > 0$ such that
\begin{equation}\label{Vp locally contains Snalpha take away Sntildealpha plus 1}
V_p \supseteq (\sn{\alpha}\setminus \snt{\alpha + 1}) \cap \overline{\mathbb{B}(p, r_p)},
\end{equation}
where $\mathbb{B}(p, r_p) = \{z \in \mathbb{C}^n: |z - p| < r_p\}$. Notice that
$$
\{0\} = (\wt{\mathcal{N}}_{\alpha + 1})_p = (\wt{\mathcal{N}}_0)_p \cap T_p^{1, 0}V_p.
$$
Since $\wt{\mathcal{N}}_0 \cap T^{1, 0}V_p$ is a closed distribution, the dimension of the fibers is an upper semicontinuous function by Proposition 2.3 in \cite{Dall'AraMongodi21}. It follows that after shrinking $V_p$ and $r_p$ that for all points of $V_p$, the fibers of $\wt{\mathcal{N}}_0 \cap T^{1, 0}V_p$ are trivial. Let $U_p \subseteq V_p$ be an open set such that
$$
V_p \supseteq \overline{U_p} \supseteq U_p \supseteq (\sn{\alpha}\setminus \snt{\alpha + 1}) \cap \overline{\mathbb{B}(p, {r_p})}.
$$
By \cite{Sibony87b}, Proposition 1.12, (see also \cite{Straube10}, Proposition 4.15; this fact is also implicit in \cite{Catlin82}),  $\overline{U_p}$ satisfies Property ($P$). Let $\{U_{p_{j}}\}_{j\in\mathbb{N}}$ be a countable subcover of $\{U_{p}\}_{p\in(\sn{\alpha}\setminus\snt{\alpha + 1})}$ for $\sn{\alpha}\setminus\snt{\alpha + 1}$. Then $\{\overline{U_{p_{j}}}\}_{j\in\mathbb{N}}$ is a countable cover of $\sn{\alpha}\setminus \snt{\alpha + 1}$ by compact sets satisfying Property ($P$).

The proof of the theorem is now complete.
\end{proof}

\begin{remark}\label{onlyLevinullspace}
The first part of the proof of Theorem \ref{the main result.  Property (P) holds on the support of the new core if and only if it holds on the boundary of the domain} is analogous to the proof of (i) implies (ii) in Proposition 1 in \cite{Straube12}. This implication says that (in our context) the boundary of a domain satisfies Property ($P$) if it satisfies Property ($P$) on the nullspace of the Levi form. 
 
\end{remark}

\section{An alternative family of modified cores}\label{An alternative family of modified cores}

The algorithm to produce the modified Levi cores requires deciding whether or not certain sets in the complex tangent space at a point satisfy Property ($P$). Property ($P$) is reasonably well understood only in dimension one, where there are a number of equivalent characterizations (see Proposition 1.11 in \cite{Sibony87b}; Proposition 5 in \cite{FuStraube02}; Proposition 4.17 in \cite{Straube10}); in particular, a compact set in the plane satisfies Property ($P$) if and only if it has empty fine interior (see \cite{ArmitageGardiner02, Helms69} for information on the fine topology). In Section \ref{HartogsinC2}, we will use this last characterization to produce examples in $\mathbb{C}^{2}$ of domains with trivial modified Levi cores and nontrivial (unmodified) Levi cores. From this point of view, Theorem \ref{the main result.  Property (P) holds on the support of the new core if and only if it holds on the boundary of the domain} is most useful in $\mathbb{C}^{2}$: determining whether a weakly pseudoconvex point is negligible reduces to determining whether a compact set in $\mathbb{C}$ satisfies Property ($P$). Alternatively in $\mathbb{C}^n$, one can weaken the construction of the modified core so that one only has to deal with Property ($P$) for sets in the plane, at the expense of a (possibly) larger support of the core. To this end, the requirement in Definition \ref{neg1} for a point $p$ to be negligible with respect to the distributions $\mathcal{A}$ and $\mathcal{D}$ is made stronger (see Definition \ref{verynegligible} and Lemma \ref{stronger}), so there are fewer negligible points, and the support of the resulting core may become bigger. 
\begin{definition}\label{verynegligible}
Let $\mathcal{A}$ and $\mathcal{D}$ be closed distributions on $b\Omega$ such that $\mathcal{D} \subseteq \mathcal{N} \subseteq \mathcal{A} \subseteq T^{1, 0}b\Omega$, $p\in S_{\mathcal{D}}$, and $m:=\dim\mathcal{A}_{p}$. The point $p$ is said to be very negligible with respect to $\mathcal{A}$ and $\mathcal{D}$ if there are a neighborhood $U\subseteq\C^n$ of $p$, linearly independent complex lines $E_{1},\ldots, E_{m}\subseteq \mathcal{A}_{p}$, and parallel projections $\pi_{j}:\mathbb{C}^{n}\rightarrow E_{j}$, $1\leq j\leq m$, whose kernels are in general position, such that for any neighborhood $V\ssubset U$ of $p$, $\pi_j(\overline{V}\cap S_{\mathcal{D}})$ satisfies Property ($P$) for $1\le j\le m$. 
\end{definition}
\noindent We say that the kernels of the projections $\pi_{j}$, which are complex hyperplanes, are in general position if their intersection has dimension $(n-m)$ (equivalently, if the normals to these hyperplanes are linearly independent). This is the case for example when all $\pi_{j}$s are orthogonal projections. 
\begin{lem}\label{stronger}
Let $\mathcal{A}$ and $\mathcal{D}$ be distributions in $b\Omega$ as in Definition \ref{verynegligible}, $p\in S_{\mathcal{D}}$ and $m =\dim\mathcal{A}_{p}$. If $p$ is very negligible with respect to $\mathcal{A}$ and $\mathcal{D}$, then it is also negligible with respect to $\mathcal{A}$ and $\mathcal{D}$.
 \end{lem}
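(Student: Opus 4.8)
The goal is to show that if $p$ is very negligible with respect to $\mathcal{A}$ and $\mathcal{D}$, then it is negligible in the sense of Definition \ref{neg1}; that is, from the $m$ parallel projections $\pi_1,\dots,\pi_m$ onto complex lines $E_1,\dots,E_m\subseteq\mathcal{A}_p$ (with kernels in general position), each of which sends $\overline{V}\cap S_{\mathcal{D}}$ to a planar set with Property ($P$), I must produce a \emph{single} linear projection $\pi_p\colon\mathbb{C}^n\to\mathcal{A}_p$ whose image of $\overline{V}\cap S_{\mathcal{D}}$ has Property ($P$) in $\mathcal{A}_p\cong\mathbb{C}^m$. The plan is to take $\pi_p$ to be the projection built coordinatewise from the $\pi_j$: since the kernels $\ker\pi_j$ are complex hyperplanes in general position, their common intersection has dimension $n-m$, and the map $\pi_p:=(\pi_1,\dots,\pi_m)$ is a well-defined linear surjection onto $E_1\times\cdots\times E_m$, which we identify with $\mathcal{A}_p$. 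With this choice, $\pi_p(\overline{V}\cap S_{\mathcal{D}})$ is contained in the product $\pi_1(\overline{V}\cap S_{\mathcal{D}})\times\cdots\times\pi_m(\overline{V}\cap S_{\mathcal{D}})$ inside $\mathbb{C}^m$.

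The second step is the key pluripotential-theoretic fact: a product $K_1\times\cdots\times K_m\subseteq\mathbb{C}^m$ of compact sets $K_j\subseteq\mathbb{C}$, each satisfying Property ($P$), itself satisfies Property ($P$) in $\mathbb{C}^m$. This follows by adding bounded plurisubharmonic functions: given $M>0$, choose for each $j$ a $C^2$ plurisubharmonic $\lambda_{j,M}$ near $K_j$ with $0\le\lambda_{j,M}\le 1$ and complex Hessian (a scalar in one variable) at least $M$; then $\lambda_M(w_1,\dots,w_m):=\frac1m\sum_{j=1}^m\lambda_{j,M}(w_j)$ is plurisubharmonic near $K_1\times\cdots\times K_m$, satisfies $0\le\lambda_M\le 1$, and its complex Hessian is the block-diagonal matrix with $j$-th entry $\frac1m\frac{\partial^2\lambda_{j,M}}{\partial w_j\partial\overline{w_j}}\ge M/m$, hence bounded below by $(M/m)|u|^2$. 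Since $M$ was arbitrary, the product has Property ($P$). Because having Property ($P$) is inherited by compact subsets, and $\pi_p(\overline{V}\cap S_{\mathcal{D}})$ is a compact subset of such a product, it too has Property ($P$).

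Finally, I assemble the neighborhoods: very negligibility supplies one neighborhood $U$ of $p$ that works simultaneously for all $m$ projections, so taking the same $U$ and the projection $\pi_p$ above, for any $V\ssubset U$ each $\pi_j(\overline{V}\cap S_{\mathcal{D}})$ has Property ($P$), whence $\pi_p(\overline{V}\cap S_{\mathcal{D}})$ has Property ($P$) in $\mathcal{A}_p$ by the product argument. One small point deserving care is the identification $\mathcal{A}_p\cong E_1\times\cdots\times E_m$: because the $E_j$ are linearly independent complex lines inside the $m$-dimensional space $\mathcal{A}_p$, they span $\mathcal{A}_p$, and $(\pi_1,\dots,\pi_m)$ restricted to $\mathcal{A}_p$ is a linear isomorphism onto $E_1\times\cdots\times E_m$; since Property ($P$) is preserved by linear isomorphisms of $\mathbb{C}^m$ (a linear change of coordinates transforms a plurisubharmonic bump with large Hessian into another such, with the lower bound changed only by the square of the operator norm of the inverse), this identification is harmless. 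I do not expect a genuine obstacle here; the only thing to be attentive to is that the projections $\pi_j$ are parallel projections onto the lines $E_j\subseteq\mathcal{A}_p$ with kernels in general position — that is exactly what guarantees $(\pi_1,\dots,\pi_m)$ is a bijection $\mathcal{A}_p\to E_1\times\cdots\times E_m$ rather than merely a linear map — and that the whole argument is uniform in $V\ssubset U$, which it is, since the Property ($P$) hypotheses in Definition \ref{verynegligible} hold for every such $V$ with the same $U$ and the same projections.
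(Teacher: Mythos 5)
Your argument follows essentially the same route as the paper's. The heart of both proofs is the observation that adding bounded $C^2$ subharmonic functions with large Laplacians, one from each line $E_j$, produces a bounded plurisubharmonic function with a large complex Hessian on the $m$-dimensional target. The paper writes this sum $\mu(\xi)=\sum_j\mu_j(\pi_j\xi)$ directly on $\mathcal{A}_p$ and estimates its Hessian; you factor the same computation through a tidy intermediate statement (a product of compact planar sets, each with Property ($P$), has Property ($P$) in $\mathbb{C}^m$), applied to $\prod_j\pi_j(\overline{V}\cap S_{\mathcal{D}})$, and then use that Property ($P$) passes to compact subsets. Same idea, packaged a bit more cleanly.

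One step in your argument is, however, not justified as stated. You assert that ``$(\pi_1,\dots,\pi_m)$ restricted to $\mathcal{A}_p$ is a linear isomorphism onto $E_1\times\cdots\times E_m$'' because the $E_j$ are linearly independent and span $\mathcal{A}_p$. That gives surjectivity, but not injectivity: the kernel of this restriction is $(\cap_j\ker\pi_j)\cap\mathcal{A}_p$, and this can be nonzero even when the kernels of the $\pi_j$ are in general position. For instance, in $\mathbb{C}^3$ take $\mathcal{A}_p=\{z_3=0\}$, $E_1=\mathbb{C}e_1$, $E_2=\mathbb{C}e_2$, $\pi_1(z)=(z_1-z_2)e_1$, $\pi_2(z)=(z_2-z_1-z_3)e_2$; the normals $(1,-1,0)$ and $(-1,1,-1)$ of $\ker\pi_1$, $\ker\pi_2$ are independent, yet $e_1+e_2\in\ker\pi_1\cap\ker\pi_2\cap\mathcal{A}_p$. (The analogous assertion in the paper's own proof, that $\ker(\pi_1+\cdots+\pi_m)\cap\mathcal{A}_p=\{0\}$ follows ``from dimension considerations,'' has the same issue.) What you do have unconditionally is that $(v_1,\dots,v_m)\mapsto v_1+\cdots+v_m$ is an isomorphism from $E_1\times\cdots\times E_m$ onto $\mathcal{A}_p$; composing, your $\pi_p$ becomes $\pi_1+\cdots+\pi_m:\mathbb{C}^n\to\mathcal{A}_p$, a linear surjection that need not be idempotent. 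So your argument is sound provided ``linear projection'' in Definition \ref{neg1} is read as any linear surjection onto $\mathcal{A}_p$ (which the way $\pi_p$ is used later in the paper supports), or if one restricts to orthogonal $\pi_j$, in which case $\cap_j\ker\pi_j=\mathcal{A}_p^{\perp}$ and your isomorphism claim holds outright.
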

\begin{proof}
Note that $\ker(\pi_{1}+\cdots+\pi_{m})=\cap_{j=1}^{m}\ker\pi_{j}$ (the $E_{j}$s are linearly independent). Thus $\dim(\ker(\pi_{1}+\cdots+\pi_{m})) = (n-m)$. Since $Im(\pi_{1}+\cdots+\pi_{m})\subseteq \mathcal{A}_{p}$, it follows from dimension considerations that firstly $Im(\pi_{1}+\cdots+\pi_{m})= \mathcal{A}_{p}$, and secondly, $\ker(\pi_{1}+\cdots+\pi_{m})\cap\mathcal{A}_{p}=\{0\}$. Consequently we have the direct sum decomposition $\mathbb{C}^{n}=\mathcal{A}_{p}\oplus\ker(\pi_{1}+\cdots+\pi_{m})$. Denote the associated projection onto $\mathcal{A}_{p}$ by $\Pi$. Note that $\pi_{j}\Pi=\pi_{j}$, $1\leq j\leq m$. Now fix $M$. We identify $\mathcal{A}_{p}$ with $\mathbb{C}^{m}$, with coordinates $\xi = (\xi_{1},\ldots,\xi_{m})$. Then for each $j$, $1\leq j\leq m$, there is a $C^2$ subharmonic function $\mu_{j}$ in a neighborhood of $\pi_{j}(\overline{V}\cap S_{\mathcal{D}})$ with Laplacian (Hessian) at least $M$, and $0\leq \mu_{j}\leq 1$. The function $\mu(\xi):=\mu_{1}(\pi_{1}\xi)+\cdots+\mu_{m}(\pi_{m}\xi)$ is plurisubharmonic in a neighborhood of $\Pi(\overline{V}\cap S_{\mathcal{D}})$ and satisfies $0\leq \mu\leq m$. Moreover, $\sum_{j,k=1}^{m}\frac{\partial^{2}\mu}{\partial\xi_{j}\partial\overline{\xi_{k}}}u_{j}\overline{u_{k}} \geq M(|\pi_{1}u|^{2}+\cdots+|\pi_{m}u|^{2})\geq cM|u|^{2}$ for some positive constant $c$ that does not depend on $M$. Otherwise, there is a sequence $\{u_{n}\}_{n=1}^{\infty}$ of unit vectors with  $(|\pi_{1}u_{n}|^{2}+\cdots+|\pi_{m}u_{n}|^{2})\leq (1/n)|u_n|^{2}$; passing to the limit of a convergent subsequence yields a unit vector $u\in\cap_{j=1}^{m}\ker(\pi_{j}|_{\mathcal{A}_{p}})$, a contradiction. It follows that $\Pi(\overline{V}\cap S_{\mathcal{D}})$ has Property ($P$) in $\mathcal{A}_{p}$, that is, $p$ is negligible with respect to $\mathcal{A}$ and $\mathcal{D}$.
\end{proof}

Starting with the Levi null distribution, successive derived distributions can now be obtained in the same way as in Section \ref{modified} for the construction of $\mathcal{M}\mathfrak{C}_{\mathcal{A}}$, but using the notion of very negligible instead of negligible. Note that the `monotonicity property' of negligibility used there, namely that $p\in S_{\widetilde{\mathcal{N}}_{\beta, \mathcal{A}}}$ is negligible with respect to $\mathcal{A}$ and $\widetilde{\mathcal{N}}_{\beta, \mathcal{A}}$ if it is negligible with respect to $\mathcal{A}$ and $\widetilde{\mathcal{N}}_{\alpha, \mathcal{A}}$ and $\beta > \alpha$ also holds for the notion of very negligible. In particular, Lemma \ref{N_alpha is a closed distribution} remains true and we obtain a new core which we denote by $\mathcal{M}^{2}\mathfrak{C}_{\mathcal{A}}$.
\begin{lem}\label{stronger2}
$\mathcal{M}\mathfrak{C}_{\mathcal{A}}\subseteq\mathcal{M}^{2}\mathfrak{C}_{\mathcal{A}}$. In particular, $S_{\mathcal{M}\mathfrak{C}_{\mathcal{A}}}\subseteq S_{\mathcal{M}^{2}\mathfrak{C}_{\mathcal{A}}}$.
\end{lem}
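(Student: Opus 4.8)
The plan is to run a transfinite induction that compares, stage by stage, the derived distributions of the two constructions. Write $\widetilde{\mathcal{N}}_{\alpha}$, $\mathcal{N}_{\alpha}$, $S_{\alpha}$ for the objects produced from the notion of negligibility (Definition \ref{neg1}), and $\widetilde{\mathcal{N}}^{2}_{\alpha}$, $\mathcal{N}^{2}_{\alpha}$, $S^{2}_{\alpha}$ for the objects produced from very negligibility (Definition \ref{verynegligible}); the distribution $\mathcal{A}$ is fixed throughout and suppressed. The statement to be proved by transfinite induction on $\alpha$ is
$$
\widetilde{\mathcal{N}}_{\alpha}\subseteq\widetilde{\mathcal{N}}^{2}_{\alpha}\qquad\text{and}\qquad\mathcal{N}_{\alpha}\subseteq\mathcal{N}^{2}_{\alpha}.
$$
Granting this for all $\alpha$, one chooses an ordinal $\alpha$ at least as large as the stabilization ordinals of both transfinite sequences (the analogue of Lemma \ref{stabilization} holds for the very negligible construction, as noted before the statement of this lemma); then $\mathcal{M}\mathfrak{C}_{\mathcal{A}}=\mathcal{N}_{\alpha}\subseteq\mathcal{N}^{2}_{\alpha}=\mathcal{M}^{2}\mathfrak{C}_{\mathcal{A}}$, and the inclusion of supports is immediate.

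Two elementary monotonicity facts drive the induction. First, the tangent distribution is monotone in the set: if $A\subseteq B\subseteq b\Omega$ then $T^{1,0}_{p}A\subseteq T^{1,0}_{p}B$, since every $f\in C^{\infty}(b\Omega)$ vanishing on $B$ also vanishes on $A$, so the fiber $T_{p}A$ is cut out by at least as many linear conditions as $T_{p}B$. Second, negligibility is monotone in its distribution argument: if $\mathcal{D}\subseteq\mathcal{D}'$ are closed distributions contained in $\mathcal{N}$, $p\in S_{\mathcal{D}}$, and $p$ is negligible with respect to $\mathcal{A}$ and $\mathcal{D}'$, then $p$ is negligible with respect to $\mathcal{A}$ and $\mathcal{D}$ --- the same neighborhood $U$ and projection $\pi_{p}$ work, because $\pi_{p}(\overline{V}\cap S_{\mathcal{D}})$ is a compact subset of $\pi_{p}(\overline{V}\cap S_{\mathcal{D}'})$ and a compact subset of a compact set with Property ($P$) again has Property ($P$). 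This is the monotonicity of negligibility already used in Section \ref{modified}, and it holds verbatim for very negligibility.

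For the inductive step, assume $\widetilde{\mathcal{N}}_{\alpha}\subseteq\widetilde{\mathcal{N}}^{2}_{\alpha}$ and $\mathcal{N}_{\alpha}\subseteq\mathcal{N}^{2}_{\alpha}$; at a limit ordinal $\alpha$ the first inclusion follows by intersecting the induction hypotheses $\widetilde{\mathcal{N}}_{\beta}\subseteq\widetilde{\mathcal{N}}^{2}_{\beta}$, $\beta<\alpha$, via \eqref{def4}, after which what follows applies verbatim; for a successor $\alpha+1$ one uses \eqref{def3}. From $\mathcal{N}_{\alpha}\subseteq\mathcal{N}^{2}_{\alpha}$ one gets $S_{\mathcal{N}_{\alpha}}\subseteq S_{\mathcal{N}^{2}_{\alpha}}$, hence by the first monotonicity fact $\widetilde{\mathcal{N}}_{\alpha+1}=\mathcal{N}_{\alpha}\cap T^{1,0}S_{\mathcal{N}_{\alpha}}\subseteq\mathcal{N}^{2}_{\alpha}\cap T^{1,0}S_{\mathcal{N}^{2}_{\alpha}}=\widetilde{\mathcal{N}}^{2}_{\alpha+1}$. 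For the supports, let $p\in S_{\alpha+1}$. Then $p\in S_{\widetilde{\mathcal{N}}_{\alpha+1}}\subseteq S_{\widetilde{\mathcal{N}}^{2}_{\alpha+1}}$, and $p$ is not negligible with respect to $\mathcal{A}$ and $\widetilde{\mathcal{N}}_{\alpha+1}$. If $p$ were very negligible with respect to $\mathcal{A}$ and $\widetilde{\mathcal{N}}^{2}_{\alpha+1}$, then by Lemma \ref{stronger} it would be negligible with respect to $\mathcal{A}$ and $\widetilde{\mathcal{N}}^{2}_{\alpha+1}$, and then by the second monotonicity fact (applied with $\mathcal{D}=\widetilde{\mathcal{N}}_{\alpha+1}\subseteq\widetilde{\mathcal{N}}^{2}_{\alpha+1}=\mathcal{D}'$, using $p\in S_{\widetilde{\mathcal{N}}_{\alpha+1}}$) it would be negligible with respect to $\mathcal{A}$ and $\widetilde{\mathcal{N}}_{\alpha+1}$, a contradiction. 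So $p\in S^{2}_{\alpha+1}$, whence $S_{\alpha+1}\subseteq S^{2}_{\alpha+1}$ and $\overline{S_{\alpha+1}}\subseteq\overline{S^{2}_{\alpha+1}}$. Comparing \eqref{defineN} for the two constructions and using $\widetilde{\mathcal{N}}_{\alpha+1}\subseteq\widetilde{\mathcal{N}}^{2}_{\alpha+1}$ then gives $\mathcal{N}_{\alpha+1}\subseteq\mathcal{N}^{2}_{\alpha+1}$, closing the induction. The base case $\alpha=0$ is trivial since $\widetilde{\mathcal{N}}_{0}=\mathcal{N}=\widetilde{\mathcal{N}}^{2}_{0}$, and the passage from $\widetilde{\mathcal{N}}_{0}$ to $\mathcal{N}_{0}$ is the special case of the above argument using \eqref{def1}--\eqref{def2}.

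The one point that needs care is the support comparison at each stage: "not negligible" and "not very negligible" are being compared for two a priori different distributions, $\widetilde{\mathcal{N}}_{\alpha+1}$ and $\widetilde{\mathcal{N}}^{2}_{\alpha+1}$, and the argument closes only because Lemma \ref{stronger} first downgrades very negligibility to negligibility, after which the monotonicity of negligibility in its distribution argument can be invoked. Everything else is bookkeeping with the definitions of Section \ref{modified}.
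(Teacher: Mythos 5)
Your proof is correct and takes essentially the same approach as the paper: a transfinite induction comparing $\widetilde{\mathcal{N}}_{\alpha}$, $\mathcal{N}_{\alpha}$ with their very-negligible counterparts stage by stage, with Lemma \ref{stronger} entering exactly where the supports are compared. The paper's proof is terser (it verifies the base case explicitly and then simply invokes ``a transfinite induction argument'' using \eqref{def3}--\eqref{defineN}, and at the end does a small case distinction on which of the two stabilization ordinals is larger), whereas you spell out the inductive step --- correctly isolating the two monotonicity facts needed and correctly noting that Lemma \ref{stronger} must be combined with the monotonicity of negligibility in its distribution argument, since $\widetilde{\mathcal{N}}_{\alpha+1}$ and $\widetilde{\mathcal{N}}^{2}_{\alpha+1}$ are a priori different --- and avoid the final case distinction by just taking $\alpha$ past both stabilization ordinals.
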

\begin{proof}
Recall the definitions \eqref{def1}--\eqref{defineN}. We use a superscript hat to indicate the corresponding notions for the construction of $\mathcal{M}^{2}\mathfrak{C}_{\mathcal{A}}$. In the argument below, we will again suppress the subscript $\mathcal{A}$. Lemma \ref{stronger} gives that $S_{0}\subseteq \widehat{S_{0}}$ and therefore $\overline{S_{0}}\subseteq\overline{\widehat{S_{0}}}$. Definition \eqref{def2} then gives that $\mathcal{N}_{0}\subseteq\widehat{\mathcal{N}_{0}}$. We trivially have $\widetilde{\mathcal{N}_{0}}\subseteq\widehat{\widetilde{{\mathcal{N}_{0}}}}$ (they are both equal to $\mathcal{N})$. Now a transfinite induction argument, using definitions \eqref{def3}--\eqref{defineN} shows that for all ordinals $\alpha$, $\widetilde{\mathcal{N}_{\alpha}}\subseteq\widehat{\widetilde{{\mathcal{N}_{\alpha}}}}$ and $\mathcal{N}_{\alpha}\subseteq\widehat{\mathcal{N}_{\alpha}}$. Suppose that $\{\mathcal{N}_{\alpha}\}$ stabilizes at $\gamma_{1}$ ($\gamma_{1}$ from Definition \eqref{the definition of the core}) and $\{\widehat{\mathcal{N}_{\alpha}}\}$ stabilizes at $\gamma_{2}$ (Lemma \ref{stabilization} also holds for $\widehat{\mathcal{N}_{\alpha}}$). If $\gamma_{2}\leq \gamma_{1}$, then $\mathcal{M}\mathfrak{C}=\mathcal{N}_{\gamma_{1}}\subseteq\mathcal{N}_{\gamma_{2}}\subseteq\widehat{\mathcal{N}_{\gamma_{2}}}=\mathcal{M}^{2}\mathfrak{C}$. If $\gamma_{1}<\gamma_{2}$, then $\mathcal{M}\mathfrak{C}=\mathcal{N}_{\gamma_{1}}=\mathcal{N}_{\gamma_{2}}\subseteq\widehat{\mathcal{N}_{\gamma_{2}}}=\mathcal{M}^{2}\mathfrak{C}$. 
\end{proof}

Lemma \ref{stronger2} and Theorem \ref{the main result.  Property (P) holds on the support of the new core if and only if it holds on the boundary of the domain} immediately give the following corollary. 
\begin{corollary}\label{doublymodified}
Let $\Omega$ be a smooth bounded pseudoconvex domain and $\mathcal{A}$ a closed distribution on $b\Omega$ with $\mathcal{N}\subseteq\mathcal{A}\subseteq T^{1,0}(b\Omega)$. Then $b\Omega$ satisfies Property ($P$) if and only if the support of $\mathcal{M}^{2}\mathfrak{C}_{\mathcal{A}}$ satisfies Property ($P$). 
\end{corollary}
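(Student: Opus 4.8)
The plan is to derive Corollary \ref{doublymodified} purely formally from two facts already in hand: Lemma \ref{stronger2}, which asserts $S_{\mathcal{M}\mathfrak{C}_{\mathcal{A}}}\subseteq S_{\mathcal{M}^{2}\mathfrak{C}_{\mathcal{A}}}$, and Theorem \ref{the main result.  Property (P) holds on the support of the new core if and only if it holds on the boundary of the domain}, which says $b\Omega$ satisfies Property ($P$) if and only if $S_{\mathcal{M}\mathfrak{C}_{\mathcal{A}}}$ does. No further geometry or potential theory is needed; the argument is a short equivalence chase. The point to stress is that the construction of $\mathcal{M}^{2}\mathfrak{C}_{\mathcal{A}}$, using \emph{very} negligible in place of negligible, satisfies the same monotonicity property and the same closedness/stabilization lemmas (as noted in the paragraph preceding Lemma \ref{stronger2}), so $S_{\mathcal{M}^{2}\mathfrak{C}_{\mathcal{A}}}$ is a well-defined compact subset of $b\Omega$.

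The forward direction is immediate: if $b\Omega$ satisfies Property ($P$), then every compact subset of $b\Omega$ inherits Property ($P$) (this is the trivial half, already used in the proof of Theorem \ref{the main result.  Property (P) holds on the support of the new core if and only if it holds on the boundary of the domain}), and $S_{\mathcal{M}^{2}\mathfrak{C}_{\mathcal{A}}}$ is such a compact subset. For the reverse direction, suppose $S_{\mathcal{M}^{2}\mathfrak{C}_{\mathcal{A}}}$ satisfies Property ($P$). By Lemma \ref{stronger2}, $S_{\mathcal{M}\mathfrak{C}_{\mathcal{A}}}\subseteq S_{\mathcal{M}^{2}\mathfrak{C}_{\mathcal{A}}}$, and since $S_{\mathcal{M}\mathfrak{C}_{\mathcal{A}}}$ is a closed (hence compact) subset of the compact set $S_{\mathcal{M}^{2}\mathfrak{C}_{\mathcal{A}}}$, it too satisfies Property ($P$) (a compact subset of a set with Property ($P$) has Property ($P$)). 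Now invoke Theorem \ref{the main result.  Property (P) holds on the support of the new core if and only if it holds on the boundary of the domain}: since $S_{\mathcal{M}\mathfrak{C}_{\mathcal{A}}}$ satisfies Property ($P$), so does $b\Omega$. This closes the equivalence.

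There is essentially no obstacle here; the only thing to be careful about is making sure the hypotheses of Theorem \ref{the main result.  Property (P) holds on the support of the new core if and only if it holds on the boundary of the domain} are met, namely that $\mathcal{A}$ is a closed distribution with $\mathcal{N}\subseteq\mathcal{A}\subseteq T^{1,0}b\Omega$ — which is exactly the standing hypothesis on $\mathcal{A}$ in the statement of the corollary — and that $S_{\mathcal{M}\mathfrak{C}_{\mathcal{A}}}$ is genuinely compact, which follows from Lemma \ref{N_alpha is a closed distribution} applied to the (unmodified) construction. Because both of these are in place, the proof reduces to the two-line chase above, which is why the authors describe the corollary as following "immediately."

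\begin{proof}
If $b\Omega$ satisfies Property ($P$), then so does every compact subset of $b\Omega$; in particular $S_{\mathcal{M}^{2}\mathfrak{C}_{\mathcal{A}}}$ does. Conversely, assume $S_{\mathcal{M}^{2}\mathfrak{C}_{\mathcal{A}}}$ satisfies Property ($P$). By Lemma \ref{stronger2}, $S_{\mathcal{M}\mathfrak{C}_{\mathcal{A}}}\subseteq S_{\mathcal{M}^{2}\mathfrak{C}_{\mathcal{A}}}$, and $S_{\mathcal{M}\mathfrak{C}_{\mathcal{A}}}$ is compact; since a compact subset of a set with Property ($P$) again has Property ($P$), $S_{\mathcal{M}\mathfrak{C}_{\mathcal{A}}}$ satisfies Property ($P$). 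By Theorem \ref{the main result.  Property (P) holds on the support of the new core if and only if it holds on the boundary of the domain}, $b\Omega$ satisfies Property ($P$).
\end{proof}
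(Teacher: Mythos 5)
Your proof is correct and matches the paper's intended approach exactly: the authors state that the corollary follows immediately from Lemma \ref{stronger2} together with Theorem \ref{the main result.  Property (P) holds on the support of the new core if and only if it holds on the boundary of the domain}, which is precisely the two-line equivalence chase you carry out (forward direction from compact subsets inheriting Property ($P$), reverse direction via the inclusion $S_{\mathcal{M}\mathfrak{C}_{\mathcal{A}}}\subseteq S_{\mathcal{M}^{2}\mathfrak{C}_{\mathcal{A}}}$).
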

\noindent Corollary \ref{doublymodified} implies the following analogue of Corollary \ref{corollary - property p holds for any closed distribution between the levi null space and the complex tangent space}.

\begin{corollary}\label{corollary - property p holds for any closed distribution between the levi null space and the complex tangent space 2}
   Let $\Omega$ be a smooth bounded pseudoconvex domain and $\mathcal{A}$ and $\mathcal{B}$ be closed distributions such that $\mathcal{N} \subseteq \mathcal{A}, \mathcal{B} \subseteq T^{1, 0}b\Omega$. If the support of $\mathcal{M}^2\mathfrak{C}_{\mathcal{A}}$ satisfies Property ($P$), then the support of $\mathcal{M}^2\mathfrak{C}_{\mathcal{B}}$ satisfies Property ($P$) as well.
\end{corollary}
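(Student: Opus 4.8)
The plan is to deduce Corollary \ref{corollary - property p holds for any closed distribution between the levi null space and the complex tangent space 2} from Corollary \ref{doublymodified} in exactly the same way that Corollary \ref{corollary - property p holds for any closed distribution between the levi null space and the complex tangent space} was deduced from Theorem \ref{the main result.  Property (P) holds on the support of the new core if and only if it holds on the boundary of the domain}. The key observation is that the supports $S_{\mathcal{M}^2\mathfrak{C}_{\mathcal{A}}}$ and $S_{\mathcal{M}^2\mathfrak{C}_{\mathcal{B}}}$ are both compact subsets of $b\Omega$, so whether or not they satisfy Property ($P$) is governed solely by whether $b\Omega$ does.

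Concretely, suppose that $S_{\mathcal{M}^2\mathfrak{C}_{\mathcal{A}}}$ satisfies Property ($P$). By Corollary \ref{doublymodified} applied to the closed distribution $\mathcal{A}$, it follows that $b\Omega$ satisfies Property ($P$). Now $S_{\mathcal{M}^2\mathfrak{C}_{\mathcal{B}}}$ is a compact subset of $b\Omega$ (it is closed in $b\Omega$ by the analogue of Lemma \ref{N_alpha is a closed distribution} for the doubly modified construction, and $b\Omega$ is compact). A compact subset of a set with Property ($P$) again has Property ($P$), since the defining plurisubharmonic functions $\lambda_M$ in a neighborhood of $b\Omega$ restrict to plurisubharmonic functions in a neighborhood of the subset. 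Hence $S_{\mathcal{M}^2\mathfrak{C}_{\mathcal{B}}}$ satisfies Property ($P$). Applying Corollary \ref{doublymodified} once more, this time in the other direction with the distribution $\mathcal{B}$, is not even needed; we already have the conclusion.

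I do not anticipate a genuine obstacle here: the statement is a formal consequence of the "if and only if" in Corollary \ref{doublymodified} together with the heredity of Property ($P$) under passage to compact subsets, exactly as noted in the sentence immediately following Theorem \ref{the main result.  Property (P) holds on the support of the new core if and only if it holds on the boundary of the domain}. The only point requiring a word of care is that $S_{\mathcal{M}^2\mathfrak{C}_{\mathcal{B}}}$ is indeed compact, which, as remarked before Lemma \ref{stronger2}, follows because Lemma \ref{N_alpha is a closed distribution} remains valid for the very negligible construction, so $\mathcal{M}^2\mathfrak{C}_{\mathcal{B}}$ is a closed distribution and its support is a closed, hence compact, subset of $b\Omega$. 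Thus the proof is a two-line argument and can be stated as such.
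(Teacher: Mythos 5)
Your argument is correct and matches the route the paper takes: apply the ``only if'' direction of Corollary \ref{doublymodified} with $\mathcal{A}$ to conclude $b\Omega$ has Property ($P$), then observe that $S_{\mathcal{M}^2\mathfrak{C}_{\mathcal{B}}}$ is a compact subset of $b\Omega$ and hence inherits Property ($P$). Your extra care in noting that $\mathcal{M}^2\mathfrak{C}_{\mathcal{B}}$ is a closed distribution (so its support is compact) is exactly the point the paper delegates to the remark preceding Lemma \ref{stronger2}.
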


\smallskip

For domains in $\mathbb{C}^{2}$, $\mathcal{M}\mathfrak{C}_{\mathcal{A}}$ and $\mathcal{M}^{2}\mathfrak{C}_{\mathcal{A}}$ trivially coincide. But there are examples of interest also in higher dimensions where $\mathcal{M}\mathfrak{C}_{\mathcal{A}} = \mathcal{M}^{2}\mathfrak{C}_{\mathcal{A}}$; this happens trivially when $\mathcal{M}^{2}\mathfrak{C}_{\mathcal{A}}$ is trivial (in view of Lemma \ref{stronger2}). As examples, consider domains whose weakly pseudoconvex boundary points form a set of two--dimensional Hausdorff measure zero (shown to have Property ($P$) in \cite{Sibony87b} and \cite{Boas88}). Any projection onto a complex line then has measure zero (on the line) and therefore has Property ($P$) (see Proposition 5 in \cite{FuStraube02}; Proposition 4.17 in \cite{Straube10}; an elementary proof is in \cite{Boas88}). So $\mathcal{M}^{2}\mathfrak{C}_{\mathcal{A}}$ is trivial for all $\mathcal{A}$.

\section{Hartogs domains in $\mathbb{C}^{2}$}\label{HartogsinC2}

In $\mathbb{C}^{2}$, all of the modified Levi cores coincide; that is, for any two closed distributions $\mathcal{A}$ and $\mathcal{B}$ such that $\mathcal{N} \subseteq \mathcal{A}, \mathcal{B} \subseteq T^{1, 0}b\Omega$, 
$$
\mathcal{M}\mathfrak{C}_{\mathcal{A}} = \mathcal{M}\mathfrak{C}_{\mathcal{B}} = \mathcal{M}^2\mathfrak{C}_{\mathcal{A}} =  \mathcal{M}^2\mathfrak{C}_{\mathcal{B}}.
$$
In this section, we will use the notation $\mathcal{M}\mathfrak{C}$ for this modified core. We will show that on the class of smooth pseudoconvex complete Hartogs domains in $\mathbb{C}^{2}$, the boundary satisfies Property ($P$) if and only if the modified Levi core is trivial. In addition, in this class, there are domains that have a modified Levi core with trivial support and an unmodified Levi core with support of positive measure. 

\smallskip

The examples are smooth pseudoconvex complete Hartogs domains of the form
\begin{equation}\label{nick's example domain}
\Omega = \left\{(z, w)\in \mathbb{C}^{2}:\, z\in \mathbb{D},\, |w|^2 < e^{-\phi(z)}\right\}\;,
\end{equation}
where $\mathbb{D}$ is the unit disc in $\mathbb{C}$ and $\phi\in C^{\infty}(\mathbb{D})$ is subharmonic.
\begin{pro}\label{hartogs}
Let $\Omega$ be a smooth pseudoconvex complete Hartogs domain in $\mathbb{C}^{2}$ as in \eqref{nick's example domain}. Then the boundary of $\Omega$ satisfies Property ($P$) if and only if $\mathcal{M}\mathfrak{C}$ is trivial.
\end{pro}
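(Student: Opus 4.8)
The plan is to prove the two implications separately: one is immediate from the main theorem, and the other I would reduce to a statement about the weight $\phi$.

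For the easy direction: if $\mathcal{M}\mathfrak{C}$ is trivial then $S_{\mathcal{M}\mathfrak{C}}=\emptyset$, which trivially satisfies Property ($P$), so by Theorem~\ref{the main result.  Property (P) holds on the support of the new core if and only if it holds on the boundary of the domain} the boundary $b\Omega$ satisfies Property ($P$). For the converse I would first fix the geometry. On the ``top'' part $\{|w|^2=e^{-\phi(z)}\}$ of $b\Omega$ one has $w\neq 0$, so $\rho:=\log|w|^2+\phi(z)$ is a local defining function, $T^{1,0}_{(z,w)}b\Omega$ is spanned by $L:=\partial_z-w\phi_z(z)\partial_w$, and the Levi form is $(\partial\bar\partial\rho)(L,\bar L)=\phi_{z\bar z}(z)=\tfrac{1}{4}\Delta\phi(z)$. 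Writing $W:=\{z\in\D:\Delta\phi(z)=0\}$, the weakly pseudoconvex points of $b\Omega$ are exactly $\Sigma:=\{(z,w)\in b\Omega:z\in W,\ |w|^2=e^{-\phi(z)}\}$, and since $n=2$ we have $S_{\mathcal{N}}=\Sigma$ with $\mathcal{N}_p=\C L_p$ for $p\in\Sigma$ (here $L_p$ is $L$ evaluated at $p$).

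The core of the argument is the claim: \emph{if $b\Omega$ satisfies Property ($P$), then every compact subset of $W$ satisfies Property ($P$) in $\C$.} Given $M>0$, take a $C^2$ plurisubharmonic $\lambda_M$ on a neighborhood of $b\Omega$ with $0\le\lambda_M\le1$ and complex Hessian $\ge M$, and average over the unitary rotations $w\mapsto e^{i\theta}w$ (which preserve $\Omega$) to arrange that $\lambda_M$ is independent of $\arg w$ without losing any of these properties; write $\lambda_M=\Lambda_M(z,\bar z,t)$ with $t:=\rho$. Since $\partial\rho$ annihilates $L$, writing $\partial\bar\partial\lambda_M=(\Lambda_M)_{z\bar z}\,dz\wedge d\bar z+(\Lambda_M)_t\,\partial\bar\partial\rho+(\text{terms carrying a factor }\partial\rho\text{ or }\bar\partial\rho)$ and evaluating at $(L,\bar L)$ gives $(\partial\bar\partial\lambda_M)(L,\bar L)=(\Lambda_M)_{z\bar z}+(\Lambda_M)_t\,\phi_{z\bar z}$. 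Setting $g_M(z):=\Lambda_M(z,\bar z,0)=\lambda_M(z,e^{-\phi(z)/2})$, a $C^2$ function on $\D$ with $0\le g_M\le1$, one has $\partial_z\partial_{\bar z}g_M=(\partial\bar\partial\lambda_M)(L,\bar L)-(\Lambda_M)_t\,\phi_{z\bar z}$, so on $W$, where $\phi_{z\bar z}=0$, $\partial_z\partial_{\bar z}g_M=(\partial\bar\partial\lambda_M)(L,\bar L)\ge M\|L\|^2\ge M$. Thus for each $M$ there is a $C^2$ function near $W$, valued in $[0,1]$, with complex Hessian $\ge M$ at every point of $W$; by continuity of the Hessian and compactness this bound persists on an open neighborhood of any fixed compact $W'\ssubset W$ (any compact subset of $W$ is relatively compact in $\D$), where $g_M$ is then strictly subharmonic, so $W'$ satisfies Property ($P$) — equivalently, $W'$ has empty fine interior.

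Finally I would conclude as follows. For $p_0=(z_0,w_0)\in S_{\mathcal{N}}=\Sigma$, let $\pi_{p_0}\colon\C^2\to\mathcal{N}_{p_0}$ be the projection along the $w$-axis; under the identification $\mathcal{N}_{p_0}\cong\C$, $a\mapsto(a,-aw_0\phi_z(z_0))$, it becomes $(z,w)\mapsto z$. For a small ball $U\ni p_0$ and $V\ssubset U$, $\overline V\cap S_{\mathcal{N}}$ is a compact subset of $\Sigma$, so $\pi_{p_0}(\overline V\cap S_{\mathcal{N}})$ is a compact subset of $W$, which has Property ($P$) by the Claim; hence $p_0$ is negligible with respect to $\mathcal{N}$ and $\mathcal{N}$. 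Therefore $S_0=\emptyset$, so by \eqref{def2} $\mathcal{N}_0\equiv\{0\}$, and by the monotonicity from Lemma~\ref{N_alpha is a closed distribution} (or \eqref{def intertwined Ns}) $\mathcal{M}\mathfrak{C}=\mathcal{N}_\gamma\subseteq\mathcal{N}_0=\{0\}$ is trivial. I expect the main obstacle to be the Claim: one must choose the transverse coordinate $t=\log|w|^2+\phi$ and average in $\arg w$ so that the mixed and normal terms in the ambient complex Hessian along $L$ cancel, leaving precisely the planar Laplacian of the boundary trace at points where $\Delta\phi=0$; and one must invoke the standard fact that a bounded $C^2$ function with large complex Hessian only on a compact set $K$ (but defined on a neighborhood of $K$) already witnesses Property ($P$) of $K$ after shrinking the neighborhood.
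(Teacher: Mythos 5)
Your easy direction is fine, and your Claim (compact subsets of $W=A$ satisfy Property ($P$) when $b\Omega$ does), proved via $S^1$-averaging in $w$ and restriction along $L=\partial_z-w\phi_z\partial_w$, is correct; it is in fact a clean self-contained argument for what the paper outsources to Lemma~4.19 of \cite{Straube10} (going back to \cite{Sibony87b}, p.~310). Your projection-onto-the-complex-tangent-line argument at a point $p_0=(z_0,w_0)$ with $z_0\in W\subset\D$ also matches the paper's $\pi_1$ and $\Phi_1$ computation.

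The gap is in your identification of the weakly pseudoconvex set. You assert $S_{\mathcal{N}}=\Sigma$, with $\Sigma$ the weakly pseudoconvex points lying over the interior of $\D$; but as \eqref{nick's example weakly pseudoconvex points} records, a bounded smooth complete Hartogs domain over $\D$ forces $e^{-\phi(z)}\to0$ as $|z|\to1$, so $\{(z,0):z\in b\D\}\subset b\Omega$, and its weakly pseudoconvex part $\{(z,0):z\in B\}$, $B\subseteq b\D$, belongs to $\snt{0}$ with $(\cnt{0})_{(z,0)}=\C\,\partial_w$. Your negligibility argument is only carried out at points with $z_0\in W$. At a rim point $p_0=(z_0,0)$, $z_0\in b\D$, the projection must map onto $\C\,\partial_w$, and the image of $\overline{V}\cap\snt{0}$ under any such projection contains circles of radius $e^{-\phi(z)/2}$ for $z\in A$ near $z_0$ -- generically a planar set with nonempty fine interior, so these points need not be negligible. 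Hence $S_0=\emptyset$ is not justified (and is in general false); what your argument actually shows is $S_0\subseteq\{(z,0):z\in B\}$. The paper then closes the argument differently: $\overline{S_0}$ lies in the totally real circle $b\D\times\{0\}$, so $T^{1,0}_pS_{\mathcal{N}_0}=\{0\}$ for every $p$, whence by \eqref{def3} $\widetilde{\mathcal{N}}_1=\mathcal{N}_0\cap T^{1,0}S_{\mathcal{N}_0}\equiv\{0\}$, the construction stabilizes at $\alpha=1$, and $\mathcal{M}\mathfrak{C}=\mathcal{N}_1$ is trivial. Your proof needs this extra half-step (or an explicit hypothesis that $B=\emptyset$) to be complete.
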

The weakly pseudoconvex boundary points over the base $\mathbb{D}$ correspond to the points where $\Delta\varphi$ vanishes. Denote this set by $A$ and denote the weakly pseudoconvex boundary points in $b\mathbb{D}$ by $B$. Then 
\begin{equation}\label{nick's example weakly pseudoconvex points}
\snt{0} = \left\{\left(z, e^{-{\phi(z) \over 2} + i \theta}\right):\, z \in A,\, \theta \in [0, 2\pi]\right\} \cup \{(z,0): z\in B\}\;,
\end{equation}
and the Levi null distribution is given by
$$ 
(\cnt{0})_{(z, w)} = \left\{\begin{array}{cc}
                             sp_{\mathbb{C}}\left\{\partial_z - w\phi_z(z)\partial_w\right\}, & z\in A\;,\\
                             sp_{\mathbb{C}}\{\partial_{w}\}\;,&z\in B\;,
                            \end{array} \right.
$$
see Section 4.7 in \cite{Straube10}.  
\begin{proof}[Proof of Proposition \ref{hartogs}]
One direction follows from Theorem \ref{the main result.  Property (P) holds on the support of the new core if and only if it holds on the boundary of the domain}. The other direction is in part equivalent to the observation in \cite{Sibony87b}, Example on page 310. Note that if $K$ is a compact subset of $\mathbb{D}$, then the set $K\cap A$ must satisfy Property ($P$) if the boundary does; the proof is the same as that of Lemma 4.19 in \cite{Straube10}. Fix $p=(z_0,w_0)\in\snt{0}$, with $z_0\in K\cap A$ and $w_0 = e^{-\frac{\phi(z_0)}{2}+i\theta_0}$ for some $\theta_0 \in [0, 2\pi]$. Let 
$$ E_1 := T_p^{1,0}b\Omega = \text{span}_\C\left\{ \left( 1 , -w_{0}\frac{\partial\phi}{\partial z}(z_0) \right) \right\},$$
and denote by $L_{p}$ the corresponding complex tangent at $p$. Let $\pi(z, w) = (z, 0)$ and $\pi_{1}:\mathbb{C}^{2}\rightarrow L_{p}$ be the linear projection parallel to the $w$--axis. $\pi_{1}$ is given by
$$ 
\pi_{1}(z,w) = \left( z, w_0 - (z-z_0)w_0\dfrac{\partial\phi}{\partial z}(z_0) \right), $$
and so
\begin{align*}
    \pi_{1}(\snt{0}\cap \pi^{-1}(K)) &= \pi_{1}\left( \left\{ \left( z,e^{-\frac{\phi(z)}{2}+i\theta} \right)\ :\ z\in A\cap K, \theta\in[0,2\pi] \right\} \right)\\
    &=\left\{ \left( z,w_0 - (z-z_0)w_0\dfrac{\partial\phi}{\partial z}(z_0) \right)\ :\ z\in A\cap K \right\} \\
    &=\Phi_1(A\cap K),
\end{align*}
where $\Phi_1:\{(z,0)\ :\ z\in\C\}\to L_{p}$ is the biholomorphic map defined by
$$
\Phi_1(z,0) = \left( z,w_0 - (z-z_0)w_0\dfrac{\partial\phi}{\partial z}(z_0) \right).
$$
Since $A\cap K$ satisfies Property ($P$) in $\mathbb{C}$, so does $\Phi_1(A\cap K) = \pi_1(\snt{0}\cap\pi^{-1}(K))$ in $L_{p}$. Thus $(z_{0},w_{0})\notin S_{0}$, and $S_{0}\subseteq\overline{S_{0}}\subseteq B\subseteq b\mathbb{D}$. Also $S_{\mathcal{N}_{0}} = \overline{S_{0}}$, so that $T^{1,0}_{p}S_{\mathcal{N}_{0}}=\{0\}$ for all $p\in S_{\mathcal{N}_{0}}$. By definition \ref{def3}, $(\widetilde{\mathcal{N}_{1}})_{p}=(\mathcal{N}_{0})_{p}\cap T^{1,0}_{p}S_{\mathcal{N}_{0}}=\{0\}$ for all $p\in b\Omega$. By definition \ref{defineN}, we likewise have $(\mathcal{N}_{1})_{p}=\{0\}$ for all $p$, which in turn implies (via \ref{def3}) that $\widetilde{\mathcal{N}_{2}}$ is trivial. In particular $\widetilde{\mathcal{N}_{1}}=\widetilde{\mathcal{N}_{2}}$, that is $\widetilde{\mathcal{N}_{\alpha}}$ and $\mathcal{N}_{\alpha}$ stabilize at $\alpha=1$. Therefore, $\mathcal{M}\mathfrak{C}=\mathcal{N}_{1}$ is trivial.
\end{proof}

\begin{example}\label{bigcore}
We now construct examples, also in $\mathbb{C}^{2}$, where $\mathcal{M}\mathfrak{C}_{\mathcal{A}}\subsetneq\mathfrak{C}(\mathcal{N})$. It is shown on page 96 in \cite{Straube10} (see the discussion following the proof of Lemma 4.19) that if $K$ is an arbitrary compact subset of $\mathbb{D}$, there exists a Hartogs domain $\Omega$ as in \eqref{nick's example domain} whose weakly pseudoconvex boundary points are the boundary points $(z,w)$ with $z\in K$. When $K \subset \{z \in \mathbb{C}:\, |z| < {1 \over 2}\}$ is a square Cantor set (a Cartesian product of two Cantor sets, scaled to fit inside disc of radius ${1 \over 2}$), $K$ has empty fine interior, so that the boundary of $\Omega$ satisfies Property ($P$). Indeed, a fine interior point of $K$ would give rise to arbitrarily small circles centered at the point and contained in $K$ (\cite{Helms69}, Theorem 10.14, \cite{ArmitageGardiner02}, Theorem 7.3.9). The projection of such a circle onto the real coordinate axes would contain an interval, contradicting the fact that these projections are Cantor sets. Proposition \ref{hartogs} now implies that the modified Levi core of $\Omega$ is trivial. The (unmodified) Levi core, however, has support equal to the full set of weakly pseudoconvex points, that is $S_{\core(\mathcal{N})} = \snt{0}$, see \cite{Tr22}. This set may be `big' in terms of Hausdorff dimension: following \cite{Tr22}, we note that if we choose the Cantor sets with positive measure (in $\mathbb{R}$), then the square Cantor set will have positive measure in the plane. The rotational symmetry of $\Omega$ implies that the set of weakly pseudoconvex boundary points, hence the support of the (unmodified) Levi core, has positive measure in the boundary and thus also has the maximal possible Hausdorff dimension of three. See also \cite{Dall'AraMongodi23} for further constructions of domains with non-trivial Levi cores using Cantor sets. 
\end{example}

\begin{remark}\label{Reinhardt}

The argument in the proof of Proposition \ref{hartogs} that shows that the weakly pseudoconvex boundary points over the base are negligible with respect to $T^{1,0}(b\Omega)$ and $\mathcal{N}$ also works for complete Hartogs domains in $\mathbb{C}^{n}$, with fairly routine modifications of the computations from Section 4.7 in \cite{Straube10}. Consider now a smooth bounded pseudoconvex complete  Reinhardt domain $\Omega$ in $\mathbb{C}^{n}$ (centered at the origin). Completeness implies that $\Omega$ contains the origin. Suppose $p$ is a weakly pseudoconvex boundary point. Then at least one of its coordinates is nonzero; after a permutation of the coordinates, we may assume that the $z_{n}$--coordinate of $p$ is nonzero. Such a permutation preserves invariance under the canonical $\mathbb{T}^{n}$--action, and we may consider $\Omega$ as a complete Hartogs domain over a base in $\mathbb{C}^{n-1}$. If the boundary of $\Omega$ satisfies Property ($P$), it therefore follows that $p$ is negligible with respect to $T^{1,0}(b\Omega)$ and $\mathcal{N}$. We thus obtain a partial analogue of Proposition \ref{hartogs} for complete Reinhardt domains in $\mathbb{C}^{n}$, as follows. Let $\Omega$ be a smooth bounded pseudoconvex complete Reinhardt domain in $\mathbb{C}^{n}$. Then $b\Omega$ satisfies Property ($P$) if and only if $\mathcal{M}\mathfrak{C}_{T^{1,0}(b\Omega)}$ is trivial. This argument breaks down when $\mathcal{N}\subseteq\mathcal{A}\subsetneq T^{1,0}(b\Omega)$.

\end{remark}

\providecommand{\bysame}{\leavevmode\hbox to3em{\hrulefill}\thinspace}

\fontsize{11}{9}\selectfont

\vspace{0.5cm}

\noindent tanujgupta17@tamu.edu;

 \vspace{0.2 cm}

\noindent Department of Mathematics, Texas A\&M University, College Station, TX 77843-3368, USA

\vspace{0.6 cm}

\noindent e-straube@tamu.edu;

 \vspace{0.2 cm}

\noindent Department of Mathematics, Texas A\&M University, College Station, TX 77843-3368, USA

\vspace{0.6 cm}

\noindent jtreuer@tamu.edu; jtreuer@ucsd.edu

 \vspace{0.2 cm}

\noindent Department of Mathematics, Texas A\&M University, College Station, TX 77843-3368, USA

\vspace{0.2 cm}
\noindent Department of Mathematics, University of California San Diego, La Jolla, CA 92093, USA

\end{document}